\theoremstyle{definition}
\theoremstyle{plain}
\newtheorem{prop}[subsubsection]{Proposition}
\newtheorem{thm}[subsubsection]{Theorem}
\newtheorem{lem}[subsubsection]{Lemma}
\newtheorem{cor}[subsubsection]{Corollary}
\newtheorem{rem}[subsubsection]{Remark}
\newcommand{\mbf}{\mathbf}
\newcommand{\mrm}{\mathrm}
\newcommand{\A}{\mathbb A}
\newcommand{\B}{\mbf B}
\newcommand{\U}{\mbf U}
\newcommand{\AU}{_\mathbb A \mbf U}
\title[Canonical bases of Borcherds-Cartan type]
{Canonical Bases of Borcherds-Cartan type}
\author{ Yiqiang Li}
\address{Department of Mathematics\\ Yale University \\
10 Hillhouse Avenue \\P.O. Box 208283\\ New Haven, CT 06520\\ fax: 203-432-7316}
\email{yiqiang.li@yale.edu}
\author{Zongzhu Lin}
\address{Department of Mathematics\\ Kansas State University\\ 
Manhattan, KS  66506}
\email{zlin@math.ksu.edu}
\thanks{\today}
\subjclass{Primary 17B37, Secondary 16G20, 14L30}
\begin{document}

\begin{abstract}
We study the canonical basis for the negative part $\U^-$ of the quantum generalized 
Kac-Moody algebra associated to a symmetric Borcherds-Cartan matrix.
The algebras $\U^-$ associated to two different matrices 
satisfying certain conditions may coincide (\ref{newalgebra}).
We show that the canonical bases coincide 
provided that the algebras $\U^-$ coincide (Theorem ~\ref{C=D}).
We also answer partially a question by Lusztig in ~\cite{lusztig3} 
(Theorem ~\ref{theorem1}).

\end{abstract}

\maketitle

\section{Introduction}
Let $\U^-(C)$ be the negative part of the quantized enveloping algebra of the 
Kac-Moody Lie algebra associated to a generalized Cartan matrix $C$. 
In ~\cite{lusztig1, lusztig2, lusztig4}, Lusztig defined and studied the 
canonical basis $\B(C)$ for $\U^-(C)$. The canonical basis $\B(C)$ has many
remarkable properties such as positivity and integrality (\cite{lusztig4}).
The appearance of the canonical basis 
is one of the major achievements in Lie theory.
(Kashiwara developed his $crystal$ $base$ theory in
~\cite{Kashiwara}. It was shown by Grojnowski and Lusztig (\cite{GL}) that the
global crystal base by Kashiwara coincides with the canonical basis by Lusztig.)

There are two quite different approaches, algebraic and geometric,
to define the canonical basis $\B(C)$ of $\U^-(C)$. 
For the geometric approach, the algebra $\U^-(C)$ is realized as
an algebra $\mathcal K_Q$, which is contained in the direct sum of the Grothendieck groups
of the bounded derived categories of complexes of sheaves on  representation varieties of certain quiver $Q$ 
without loops (\cite{lusztig2}). 
The irreducible perverse sheaves appearing in $\mathcal K_Q$ give rise to the canonical
basis $\B(C)$ of $\U^-(C)$. 
The ``simplest'' irreducible perverse sheaves in $\mathcal K_Q$, 
which generate $\mathcal K_Q$, 
correspond to the Chevalley generators of $\U^-(C)$.

When the matrix $C$ is replaced by a Borcherds-Cartan matrix, the algebra $\U^-(C)$ becomes 
the negative part of the quantized enveloping algebra of a generalized Kac-Moody algebra 
(quantum generalized Kac-Moody algebra for short). In this more general setting,
the algebraic approach is carried out in ~\cite{JKK}, 
and the global crystal base $\mathbb B(C)$ for $\U^-(C)$ is defined.
The geometric approach is taken recently in ~\cite{KS}, and 
the canonical basis $\B(C)$ for $\U^-(C)$ is defined.
The two bases $\mathbb B(C)$ and $\B(C)$ coincide under certain condition on $C$. 
It is conjectured that this condition can be removed.

One notices that in this more general setting, the Borcherds-Cartan matrix $C$ is not 
determined by the algebra $\U^-(C)$ uniquely. 
Under certain conditions on two given Borcherds-Cartan matrices $C$ and $D$, 
the algebras $\U^-(C)$ and $\U^-(D)$ coincide (\ref{newalgebra}). 
It is not clear if $\B(C)=\B(D)$ or $\mathbb B(C)=\mathbb B(D)$ when $\U^-(C)=\U^-(D)$.

The first main result in this paper says that the canonical bases $\B(C)$ and $\B(D)$
coincide provided that the algebras $\U^-(C)$ and $\U^-(D)$ coincide 
for two symmetric Borcherds-Cartan matrices $C$ and $D$. 
The second main result
in this paper answers partially a question in ~\cite{lusztig3}.
This question asked if the algebra $\mathcal K_Q$ mentioned above is still generated
by the ``simplest'' irreducible perverse sheaves in $\mathcal K_Q$ when
the quiver $Q$ has loops. 
In this more general setting, the algebra $\mathcal K_Q$ is bigger than $\U^-(C)$.
We show that $\mathcal K_Q$ is generated by the simplest possible elements
in $\mathcal K_Q$ when each imaginary vertex has at least two loops (Theorem ~\ref{theorem1}).
In general, the subalgebra generated by these simplest possible elements 
in $\mathcal K_Q$ has a canonical-type basis, 
and a characterization of these subalgebras is also given (Theorem ~\ref{theorem1}, 
Proposition ~\ref{prop}).

To prove the first result, we investigate in detail  a factorization
of Lusztig's resolution of the representation varieties of a quiver
with possible loops. 
Such a factorization was first appeared in ~\cite{KS}.
This factorization resolves the imaginary parts of the representation
varieties first  and then resolves the real part.
In the case when the flags used in the resolution on the imaginary part 
are full flags, we are able to construct three based algebras from certain
classes of simple perverse sheaves on the representation varieties and
the intermediate varieties in the factorization. 
It turns out that the three algebras are isomorphic and the bases of
the three algebras are compatible with each other.
One of the algebras is the one constructed in ~\cite{KS}, which is
shown to be the geometric realization of the quantum generalized
Kac-Moody algebras of the Borcherds-Cartan matrix of the quiver.
The other two algebras are new. One of the two new algebras is independent of
the number of the loops on each imaginary vertex. 
This leads us to a proof of the first result. 
(See Section ~\ref{newalgebra}.)

In order to prove the second result, we show that the subalgebra
generated by the generators $L_{i,n}$ has a basis, which we call the
canonical basis for this subalgebra, consisting
of certain semisimple perverse sheaves. In the case when the number of
loops is at least two for each imaginary vertex, semisimplicity can be
strengthen to simplicity. In this case the subalgebra coincides with
Lusztig's algebra. Lusztig's question is then answered affirmatively.
To show that the semisimple perverse sheaves, as basis elements for
the subalgebra,  are generated by the
generators $L_{i,n}$, we build up a bridge between the
full-flag resolutions and the partial-flag resolutions. 
There is no obvious relation between the full-flag and partial-flag
resolutions. 
This was done by reducing to a certain resolution of the representation variety
associated to the subquiver of the original quiver consisting only
non-loop arrows. 
Then we investigate the compatibility of several
induction  functors defined on various varieties along the bridge. 
This leads us to the proof of the second result.
The proof of the second result is the difficult part of this paper.

The paper is organized as follows. 
In Section 2, 
we define the quantum generalized Kac-Moody algebra, and its negative
part.
In Section 3, we study the factorization of the resolutions of 
the representation varieties of a quiver with loops. 
Then we relate this factorization
with the resolution of the representation varieties of the subquiver
consisting of all non-loop arrows.
Section 4 is devoted to the construction of  classes of
(semi)simple perverse on various varieties  in Section 3.
In Section 5, we study Lusztig's induction functor, and its several
variations. We also show  the compatibility of the various induction
functors defined.
In Section 6, we constructed various algebras from the classes of
(semi)simple perverse sheaves defined in Section 5. The first result
is then proved.
In Section 7, we prove the second result, and give a characterization
for Lusztig's algebra.

{\bf Acknowledgement.} The authors thank Olivier Schiffmann for
stimulating discussion via email  about the arguments in \cite{KS}.
The research was partially supported by NSF grant
DMS-0200673. Theorem ~\ref{theorem1} was reported by
Z. Lin in University of Missouri at Columbia, Oct. 2005, and by
Y. Li in the AMS special session on Representations of Groups and
Algebras, Eugene, Oregon, Nov. 2005.
We are very grateful to the referee for the suggestions in improving the paper.

\section{Quantum generalized Kac-Moody algebras}

\subsection{Symmetric Borcherds-Cartan matrices}
\label{Borcherds}
Let $I$ be a countable set. A $symmetric$ $Borcherds$-$Cartan$ matrix is a
matrix $C=(c_{ij})_{i,j \in I}$ satisfying 
\begin{align*}
c_{ii} \in \{ 2, 0, -2, -4\cdots\} \; \mrm{and}\; c_{ij}=c_{ji} \in
\mathbb Z_{\leq 0} \quad \mbox{for} \; i\neq j \in I.
\end{align*}
We set $I^{+}=\{ i\in I | c_{ii}=2\}$ and $I^-=I - I^{+}$.

\subsection{Quantum generalized Kac-Moody algebras}
\label{quantum}
Let $\mathbb Q(v)$ be the field of rational functions. 
For any $m\leq n$, we set 
\begin{align*}
[n]=\frac{v^n-v^{-n}}{v-v^{-1}}, \;\mbox{and}\; [n]^!=[n] [n-1]\cdots[1].
\end{align*}

The $quantum$ $generalized$ $Kac$-$Moody$ $algebra$ $\mbf U=\mbf U(C)$
attached to the symmetric Borcherds-Cartan matrix 
$C=(c_{ij})_{i,j \in  I}$ 
is an associative $\mathbb Q(v)$-algebra with generators
\begin{align*}
E_i, F_i, K_i \; \mrm{and} \; K_i^{-1} \;\;\; (i\in I)
\end{align*}
and satisfying the following relations
\begin{align*}
&K_i K_i^{-1}=1, \quad  K_i^{-1} K_i =1,  \quad K_i K_j= K_j K_i,\\ 
&K_i E_j= v^{c_{ij}} E_j K_i,  \quad K_i F_j= v^{-c_{ij}} F_j K_i,\\
&E_i F_j-F_j E_i= \delta_{ij} \frac{K_i-K_i^{-1}}{v-v^{-1}},\\
&\sum_{p=0}^{1-c_{ij}} (-1)^p E_i^{(p)} E_j E_i^{(1-c_{ij}-p)} =0 \quad
\mbox{for any }\;i\in I^+,\; j\in I, i\neq j,\\
&\sum_{p=0}^{1-c_{ij}} (-1)^p F_i^{(p)} F_j F_i^{(1-c_{ij}-p)} =0 \quad
\mbox{for any} \; i\in I^+,\; j\in I, i\neq j, \\
&E_iE_j=E_jE_i, \quad F_iF_j=F_jF_i\quad \quad \mbox{if} \; c_{ij}=0.
\end{align*}
Here we use the notations $E_i^{(n)}=\frac{E_i^n}{[n]^!}$ and
$F_i^{(n)}=\frac{F_i^n}{[n]^!}$ for $i\in I^+$ and $n\in \mathbb N$.

Let $\mbf U^-$ be the $\mathbb Q(v)$-subalgebra of $\mbf U$ generated by
$F_i$ for $i\in I$. 
Set $\mathbb A=\mathbb Z[v,v^{-1}]$, the ring of Laurent polynomials. 
Let $\AU^-$ denote the $\A$-subalgebra of $\U^-$ generated by the elements
$F_i^{(n)}$ for $i\in I^+$, and  $F_i$ for $i\in I^-$.

\section{Varieties associated to quivers}

\subsection{Quivers.} 
\label{quivers}
A $quiver$ is a quadruple  $Q=(I, \Omega, s,t: \Omega \to I)$ where $I$ is the
vertex set, $\Omega$ is the arrow set, and $s(\omega)$ 
and $t(\omega)$ are the starting and terminating vertices for
$\omega\in \Omega$, respectively.   
In this paper, we assume that the quiver is $locally$ $finite$:
the vertex set $I$ is countable and  
the set $\{\omega\in \Omega\;|\; s(\omega), t(\omega)\in J\}$ is
finite for any finite subset $J\subseteq I$. 
 
A vertex $i\in I$ is called $imaginary$ if 
$s(\omega)=t(\omega)=i$ for some $\omega \in \Omega$.
Let $I^-$ be the set of all imaginary vertices in $I$. We set
$I^+=I\backslash I^-$. Vertices in $I^+$ will be called $real$ vertices.

Let $\Omega(i)$ be  the subset of $\Omega$ consisting of all
$\omega$ such that $s(\omega)=t(\omega)=i$. For any $i\in I$, let
\begin{equation*}
l_i=\# \Omega(i).
\end{equation*}
Note that when $i\in I^+$, $l_i=0$. 
we set
\[
\Omega^{-}=\cup_{i\in I^-} \Omega(i)
\quad
\mbox{and}
\quad
\Omega^{+}=\Omega \backslash \Omega^-.
\]
(In other words, $\Omega^-$ consists of all loop arrows, while
$\Omega^+$ consists of all loop free arrows.)

The (nonsymmetric) $Euler$ $f\!orm$ of $Q$
$<,>: \mathbb Z[I] \times \mathbb Z[I] \to \mathbb Z$
is defined by
\begin{equation*}
<\alpha, \beta>= \sum_{i\in I} \alpha_i\beta_i -\sum_{\omega\in
  \Omega} \alpha_{s(\omega)} \beta_{t(\omega)}
\quad \mbox{for}\;\alpha, \beta \in \mathbb Z[I].
\end{equation*}
The $symmetric$ $Euler$ $f\!orm$
$(\, ,\,): \mathbb Z[I]\times \mathbb Z[I] \to \mathbb Z[I]$
is defined by  
\[(\alpha, \beta)=<\alpha, \beta> + <\beta,\alpha> 
\quad \mbox{for all} \; \alpha, \beta \in\mathbb Z[I].\]
Let
\[
c_{ij}=(i,j)\quad \mbox{for} \; i,j\in I.
\]
Then the matrix $C(Q)=(c_{ij})_{i,j\in I}$ is a  symmetric Borcherds-Cartan
matrix (see ~\ref{Borcherds}). We call $C=C(Q)$ the
Borcherds-Cartan matrix associated to $Q$.

\subsection{Representation  varieties associated to quivers}

We fix an algebraically closed field $k$.
Given any $I$-graded $k$-vector space $V=\oplus_{i\in I} V_i$, let 
\begin{equation*}
G_V=\Pi_{i\in I}\mrm{GL}(V_i)
\end{equation*}
where $\mrm{GL}(V_i)$ is the general linear group of $V_i$.  Let 
\[
E_V=\oplus_{\omega\in \Omega}\mrm{Hom}_k(V_{s(\omega)},V_{t(\omega)}).\] 
$G_V$ acts on $E_V$ by conjugation:
\[
(g.x)_{\omega}=g_{t(\omega)} x_{\omega} g_{s(\omega)}^{-1}
\]
for any $g\in G_V$ and $x\in E_V$.
Given any subset $\Delta \subseteq \Omega$, 
let 
\[
E_{V,\Delta}=\oplus_{\omega \in \Delta} \mrm{Hom}
(V_{s(\omega)},V_{t(\omega)}).
\] 
In particular,
$E_{V,\Omega^+}=\oplus_{\omega \in \Omega^+} 
\mrm{Hom}(V_{s(\omega)},  V_{t(\omega)}).
$

\subsection{Flag varieties}
\label{flag}
Given any $\nu \in \mathbb N[I]$, let $\mathcal X_{\nu}$ be the set of
all pairs $(\mbf i, \mbf a)$ where
\[
\mbf i=(i_1,\cdots,i_n)  \; \;(i_l\in I) 
\quad \mbox{and} \quad 
\mbf a=(a_1,\cdots,a_n) \;\; (a_l\in \mathbb N)
\]
such that $a_1 i_1 +\cdots+ a_ni_n=\nu$.
Let $V$ be an $I$-graded vector space of dimension $\nu$, a flag 
\[
\mbf F^{\bullet}=(V=V^0\supseteq V^1 \supseteq \cdots \supseteq V^n=0)
\]
is called $o\!f$ $type$ $(\mbf i,\mbf a)$ ($\in \mathcal X_{\nu}$) if
$\dim V^{l-1}/V^l=a_li_l$ for $l=1,\cdots, n$. We set
\[
\mathcal F_{\mbf i,\mbf a}=\{\mbox{all flags}\; \mbf F^{\bullet}
 \;\mbox{of type}\; (\mbf i,\mbf a)\}.
\]
Note that $\mathcal F_{\mbf i,\mbf a}$ is a smooth, irreducible
projective variety. (Indeed, it is a product of the partial flag
varieties.)

$G_V$ acts on $\mathcal F_{\mbf{i,a}}$ transitively, i.e.,
\[
g.F^{\bullet}=(V=V^0\supseteq g(V^1) \supseteq g(V^2) \supseteq \cdots \supseteq V^n=0)
\]
for any $g\in G_V$ and $\mbf F^{\bullet}\in \mathcal F_{\mbf{i,a}}$.

\subsection{(Partial) Resolutions of representation varieties}
\label{resolution}
For any $x\in E_V$ and $\mbf F^{\bullet}\in \mathcal F_{\mbf{i,a}}$,
$\mbf F^{\bullet}$ is $x$-$stable$ if
$
x_{\omega}(V^{l-1}_{s(\omega)}) \subseteq V^{l}_{t(\omega)}$
for any $\omega\in \Omega$ and $l=1, \cdots, n$.
Let
\[
\widetilde{\mathcal F}_{\mbf{i,a}}=\{ (x, \mbf F^{\bullet}) \in
E_V\times \mathcal F_{\mbf{i,a}} \;|\; \mbf F^{\bullet} \; \mbox{is} \;
x \mbox{-stable}\}.
\]
$G_V$ acts on $\widetilde{\mathcal F}_{\mbf{i,a}}$ by 
$g.(x, \mbf F^{\bullet})=(g.x, g.\mbf F^{\bullet})$ 
for any $g\in G_V$ and
$(x,\mbf F^{\bullet})\in \widetilde{\mathcal F}_{\mbf{i,a}}$.
We have the following diagram
\[
\begin{CD}
\widetilde{\mathcal F}_{\mbf{i,a}} @>\pi_{\mbf{i,a}}>> E_V\\
@Vp_{\mbf{i,a}}VV\\
\mathcal F_{\mbf{i,a}} 
\end{CD}
\]
where $\pi_{\mbf{i,a}}$ and $p_{\mbf{i,a}}$  are the first and 
second projections, respectively.
From the above diagram, one deduces the following well-known facts
\begin{lem}[\cite{lusztig2}]
\label{smooth}
\begin{enumerate}
\item $\pi_{\mbf{i,a}}$ is a $G_V$-equivariant, projective morphism.
\item $p_{\mbf{i,a}}$ is a vector bundle.
\item $\widetilde{\mathcal F}_{\mbf{i,a}}$ is a smooth, irreducible variety.
\end{enumerate}
\end{lem}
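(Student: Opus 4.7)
The plan is to verify the three parts separately, exploiting the fact that $\widetilde{\mathcal F}_{\mbf{i,a}}$ is realized as the incidence variety of pairs $(x,\mbf F^\bullet)$ inside $E_V\times \mathcal F_{\mbf{i,a}}$ cut out by the stability conditions $x_\omega(V^{l-1}_{s(\omega)})\subseteq V^{l}_{t(\omega)}$.

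For (1), the $G_V$-equivariance of $\pi_{\mbf{i,a}}$ is immediate from the definition of the diagonal action. To see projectivity, I would first observe that the stability conditions are linear in $x$ and algebraic in $\mbf F^\bullet$, so they define a closed subvariety of $E_V\times\mathcal F_{\mbf{i,a}}$. Since $\mathcal F_{\mbf{i,a}}$ is projective (a product of partial flag varieties, as already noted in \ref{flag}), the projection $E_V\times \mathcal F_{\mbf{i,a}} \to E_V$ is projective, and hence so is its restriction $\pi_{\mbf{i,a}}$ to the closed subvariety $\widetilde{\mathcal F}_{\mbf{i,a}}$.

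For (2), fix a flag $\mbf F^\bullet_0\in\mathcal F_{\mbf{i,a}}$ and let $P\subset G_V$ be its stabilizer. The fiber of $p_{\mbf{i,a}}$ over $\mbf F^\bullet_0$ is the linear subspace
\[
W_{\mbf F^\bullet_0}=\{x\in E_V\mid x_\omega(V^{l-1}_{0,s(\omega)})\subseteq V^{l}_{0,t(\omega)}\text{ for all }\omega,l\}\subseteq E_V,
\]
which is $P$-stable. Since $G_V$ acts transitively on $\mathcal F_{\mbf{i,a}}$ (so that $\mathcal F_{\mbf{i,a}}=G_V/P$), the whole variety $\widetilde{\mathcal F}_{\mbf{i,a}}$ identifies with the associated bundle $G_V\times^P W_{\mbf F^\bullet_0}$, and under this identification $p_{\mbf{i,a}}$ becomes the natural projection, which is a $G_V$-equivariant vector bundle. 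The dimension of the fibers is constant and can be read off from $(\mbf i,\mbf a)$ and the arrows of $Q$.

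For (3), smoothness and irreducibility follow at once from (2): $\mathcal F_{\mbf{i,a}}$ is smooth and irreducible as a product of partial flag varieties, and a vector bundle over a smooth irreducible base is smooth and irreducible. I do not expect any serious obstacle here; the only small care needed is in (2), where one must verify that the locally trivial structure of the associated bundle $G_V\times^P W_{\mbf F^\bullet_0}$ really makes $p_{\mbf{i,a}}$ a vector bundle (i.e.\ the transition functions act linearly on the fiber), which is automatic from the $P$-linearity of the action on $W_{\mbf F^\bullet_0}\subseteq E_V$.
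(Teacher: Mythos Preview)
Your proof is correct and is the standard argument. Note that the paper itself does not give a proof of this lemma: it merely states the result as ``well-known facts'' and cites \cite{lusztig2}. Your argument is precisely the one implicit in Lusztig's original treatment---closed embedding in $E_V\times\mathcal F_{\mbf{i,a}}$ for projectivity, associated-bundle description $G_V\times^P W_{\mbf F^\bullet_0}$ for the vector-bundle structure, and smoothness/irreducibility of the total space inherited from the base---so there is nothing to compare.
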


\subsection{A factorization of $\pi_{\mbf{i,a}}$}
\label{factorization}
For any $\nu \in \mathbb N[I]$, we set $\nu^-=\sum_{i\in I^-} \nu_i i$,
the component of $\nu$ supported on $I^-$. (Recall that $I^-$ is  the set of all imaginary
vertices.) For any pair $(\mbf{i,a})\in \mathcal X_{\nu}$, let
$(\mbf{i^-,a^-}) \in \mathcal X_{\nu^-}$ be the pair obtained from 
$(\mbf{i,a})$ by deleting the real vertices in $\mbf i$ and the
corresponding entries in $\mbf a$. Let
\[
\mathcal E_{\mbf{i,a}}=\{ (x, \mbf D^{\bullet}) \in E_V\times
\mathcal F_{(\mbf{i^-,a^-})}  |\; x_{\omega} (D_{s(\omega)}^{l-1}) \subseteq
D_{t(\omega)}^l,
\;
\forall\; \omega\in \Omega^- 
\;
\mbox{and}
\;
1\leq l \leq n
\}.
\]

Suppose that $V$ is an $I$-graded vector space of dimension $\nu$, we
set $V^-=\oplus_{i\in I^-} V_i$, the imaginary component of $V$.
For any flag $\mbf F^{\bullet}\in \mathcal F_{\mbf{i,a}}$,  
we set $D^l=V^l\cap V^-$. Then the flag
\[
\mbf F^{\bullet} \cap V^-=
(V^-=D^0 \supseteq D^1 \supseteq D^2\supseteq \cdots
\supseteq 0)
\]
can be regarded as a flag in $\mathcal F_{(\mbf{i^-,a^-})}$.
We have the following diagram
\[
\begin{CD}
\widetilde{\mathcal F}_{\mbf{i,a}} @>\mbf{\phi_{i,a}}>> 
\mathcal E_{\mbf{i,a}} @>\mbf{\psi_{i,a}}>> E_V,
\end{CD}
\]
where $\phi_{\mbf{i,a}}: (x, \mbf F^{\bullet}) \mapsto (x, \mbf
F^{\bullet}\cap V^-)$ and $\psi_{\mbf{i,a}}: (x, \mbf D^{\bullet})
\mapsto x$ are natural projections. By definition, we have
\begin{align*}
\pi_{\mbf{i,a}}&=\psi_{\mbf{i,a}} \phi_{\mbf{i,a}}.
\end{align*}

Note that $\phi_{\mbf{i,a}}$ is proper and $G_V$-equivariant, and
$\psi_{\mbf{i,a}}$ is semismall and $G_V$-equivariant. Further, when
$l_i\geq 2$ for all $i\in I^-$, $\psi_{\mbf{i,a}}$ is a small resolution.   
For a proof of these facts, see ~\cite{lusztig3},  ~\cite{KS} and ~\cite{KSaddendum}.

\subsection{Relation with varieties associated to quivers without loops}
\label{relation}
Similar to $\widetilde{\mathcal F}_{\mbf{i,a}}$ and 
$\mathcal E_{\mbf{i,a}}$, we define
\begin{align*}
\widetilde{\mathcal F}_{\mbf{i,a}}^+&
= \{ (x, \mbf F^{\bullet}) \in E_{V, \Omega^{+}}\times \mathcal
F_{\mbf{i,a}}\; | \; \mbf F^{\bullet} \; \mbox{is}\;
x\mbox{-stable}\}, \; \mbox{and}\\
\mathcal E_{\mbf{i,a}}^+&=E_{V,\Omega^+}\times \mathcal F_{\mbf{i^-,a^-}}.
\end{align*}
Combining with the diagram in Section ~\ref{factorization}, we have
the diagram
\[
\begin{CD}
\widetilde{\mathcal F}_{\mbf{i,a}} @>\phi_{\mbf{i,a}}>> \mathcal
E_{\mbf{i,a}} @>\psi_{\mbf{i,a}}>> E_V\\
@V\gamma_{\mbf{i,a}}VV @V\delta_{\mbf{i,a}}VV @VVV\\
\widetilde{\mathcal F}_{\mbf{i,a}}^+ @>\phi_{\mbf{i,a}}^+>> \mathcal
E_{\mbf{i,a}}^+ @>\psi_{\mbf{i,a}}^+>> E_{V,\Omega^+},
\end{CD}
\]
where the first row is the diagram in Section ~\ref{factorization},
the second row is defined in a similar manner as the first row, and the vertical maps are the
self-explained projections. Observe that
\begin{enumerate}
\item[(1)] $\gamma_{\mbf{i,a}}$ and $\delta_{\mbf{i,a}}$ are vector
      bundles.
\item[(2)] The left square is cartesian.
\end{enumerate}

\subsection{A bridge}
\label{bridge}
For any pair 
$(\mbf{i,a})=((i_1,\cdots, i_n) ,(a_1,\cdots,a_n))\in \mathcal X_{\nu}$ 
with  the imaginary vertex $i_{j_1}, i_{j_2},\cdots i_{j_s}$
($j_1<\cdots <j_s$) in $\mbf i$, 
we set
\[
\mbf j=
(i_1,\cdots, i_{j_1-1},  
\overbrace{i_{j_1},\cdots, i_{j_1}}^{a_{j_1}\; \mbox{copies}},
i_{j_1+1},\cdots, i_{j_2-1}, 
\cdots, 
\overbrace{i_{j_s},\cdots, i_{j_s}}^{a_{j_s}\; \mbox{copies}}, 
i_{j_s+1}, \cdots, i_n).
\]
If we set $\mbf b$ to be the sequence of the same length as $\mbf j$
and with all entries corresponding to the imaginary vertex in $\mbf b$
are 1 and all entries corresponding to the real vertex in $\mbf b$
equal to $a_l$, 
then the pair $(\mbf{j,b}) \in \mathcal X_{\nu}$. 
By abusing the notation, We write $\mbf j$ for $(\mbf{j,b})$. 
Consider the following diagram
\[
\begin{CD}
\widetilde{\mathcal F}_{\mbf j}^+ @>\phi_{\mbf j}^+>> \mathcal
E_{\mbf j}^+\\
@V\alpha_{\mbf{i,a}}VV  @V\beta_{\mbf{i,a}}VV\\
\widetilde{\mathcal F}_{\mbf{i,a}}^+ @>\phi_{\mbf{i,a}}^+>> \mathcal
E_{\mbf{i,a}}^+
\end{CD}
\]
where
the rows are defined in Section ~\ref{factorization} and the vertical
maps are obvious projections. Observe that
\begin{enumerate}
\item[(1)] $\alpha_{\mbf{i,a}}$ and $\beta_{\mbf{i,a}}$ are smooth with
      connected fibres. The square is cartesian.
\end{enumerate}

\section{Perverse sheaves on varieties associated to quivers}

\subsection{Notations.}
We fix some notations, most of them are taken from  ~\cite{lusztig5}.

Fix a prime $l$ that is invertible in $k$.
Given any algebraic variety $X$ over $k$, 
denote by $\mathcal{D}(X)$ the bounded derived category of complexes 
of $l$-adic sheaves on $X$ (\cite{BBD}).
Let $\mathcal M(X)$ be the full subcategory of $\mathcal D(X)$
consisting of all perverse sheaves on $X$ (\cite{BBD}).

Let $\bar{\mathbb Q}_l$ be an algebraic closure of the field of $l$-adic numbers.
By abuse of notation, denote by  $\bar{\mathbb Q}_l=(\bar{\mathbb Q}_l)_X$ 
the complex concentrated on degree zero, corresponding to
the constant $l$-adic sheaf over $X$. 
For any complex $K \in \mathcal D(X)$ and $n\in \mathbb Z$, 
let $K[n]$ be the complex such that $K[n]^i=K^{n+i}$ and the
differential is multiplied by a factor $(-1)^n$.
Denote by $\mathcal M(X)[n]$ the full subcategory of 
$\mathcal D(X)$ whose objects are of the form $K[n]$ with 
$K\in \mathcal M(X)$.
For any $K\in \mathcal D(X)$ and $L\in \mathcal D(Y)$, denote
by $K\boxtimes L$ the external tensor product of $K$ and $L$ in
$\mathcal D(X\times Y)$.

Let $f: X\to Y$ be a morphism of varieties, denote by
$f^*: \mathcal D(Y) \to \mathcal D(X)$ and 
$f_!: \mathcal D(X) \to \mathcal D(Y)$ 
the inverse image functor and the direct image functor with compact support, respectively.

Let $G$ be a connected algebraic group. Assume that $G$ acts on $X$
algebraically. Denote by $\mathcal D_G(X)$ the full subcategory of
$\mathcal D(X)$ consisting of all $G$-equivariant complexes over
$X$.
Similarly, denote by $\mathcal M_G(X)$ the full subcategory of
$\mathcal M(X)$ consisting of all $G$-equivariant perverse sheaves (\cite{lusztig5}).
If $G$ acts on $X$ algebraically and $f$ is a principal $G$-bundle,
then $f^*$ induces a functor (still denote by $f^*$) of equivalence
between $\mathcal M(Y)[\dim G]$ and $\mathcal M_G(X)$.
Its inverse functor is denoted by $f_{\flat}: \mathcal M_G(X) \to
\mathcal M(Y)[\dim G]$ (\cite{lusztig5}).

\subsection{A class of simple perverse sheaves on $E_V$}
\label{simple}
The complex 
$\bar{\mathbb Q}_l[\dim \widetilde{\mathcal F}_{\mbf{i,a}}]$ on 
$\widetilde{\mathcal F}_{\mbf{i,a}}$ is a simple $G_V$-equivariant 
perverse sheaf due to Lemma ~\ref{smooth} (3). 
By Lemma ~\ref{smooth} (1) and the Decomposition
theorem in \cite{BBD}, the complex
\[
L_{\mbf{i,a}}= (\pi_{\mbf{i,a}})_!(\bar{\mathbb Q}_l[\dim
\widetilde{\mathcal F}_{\mbf{i,a}}])
\]
is a $G_V$-equivariant semisimple complex on $E_V$.

Let 
$\mathcal P_V$ be the set of isomorphism classes of all simple
perverse sheaves $P$ appearing  in $L_{\mbf{i,a}}$ as a summand with a
possible shift for all $(\mbf{i,a}) \in \mathcal X_{\nu}$.

Let $\mathcal Q_V$ be the full subcategory of $\mathcal D(E_V)$
whose objects are finite direct sums of shifts of simple perverse
sheaves coming from $\mathcal P_V$. Note that all complexes in
$\mathcal Q_{V}$ are semisimple and $G_V$-equivariant.

Let $\mathcal Q_T \boxtimes \mathcal Q_W$ be the full subcategory of 
$\mathcal D(E_T\times E_W)$ whose objects are of the form
$P'\boxtimes P''$ for any $P'\in \mathcal Q_T$ and $P''\in \mathcal Q_W$.

\subsection{A class of $semisimple$ perverse sheaves on $E_V$}
\label{semisimple} 
Similar to the complex $L_{\mbf{i,a}}$ in Section ~\ref{simple}, we
see  that the complex
\[
L^-_{\mbf{i,a}}=(\phi_{\mbf{i,a}})_! (\bar{\mathbb Q}_l [\dim 
\widetilde{\mathcal F}_{\mbf{i,a}}])
\]
is a $G_V$-equivariant semisimple complex on $\mathcal E_{\mbf{i,a}}$
(see Section ~\ref{factorization} for notations).

Let $\widehat{\mathcal E}_{\mbf{i,a}}$ be the set of isomorphism
classes of all simple perverse sheaves on $\mathcal E_{\mbf{i,a}}$ 
appearing as direct summands in
the complex  $L^-_{\mbf{i,a}}$ with possible shifts.

\begin{lem} [\cite{KS}, ~\cite{KSaddendum}]
\label{perverse}
\begin{enumerate}
\item $(\psi_{\mbf{i,a}})_!(R)$ is a semisimple perverse sheaf on $E_V$  for any
      $R\in  \widehat{\mathcal E}_{\mbf{i,a}}$.
\item $(\psi_{\mbf{i,a}})_!(R)$ is  simple  if
      $l_i \geq 2$ for all $i\in I^-$.
\end{enumerate}
\end{lem}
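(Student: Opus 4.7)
The plan is to deduce both statements from the decomposition theorem of \cite{BBD} applied to the proper map $\psi_{\mbf{i,a}}$, in conjunction with the semismall and small properties of $\psi_{\mbf{i,a}}$ recorded at the end of Section~\ref{factorization}.

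For part (1), I would first show that $(\psi_{\mbf{i,a}})_!$ is $t$-exact with respect to the perverse $t$-structure. This is a standard consequence of ``proper plus semismall'': the semismall estimate $2\dim \psi_{\mbf{i,a}}^{-1}(y) \leq \dim \mathcal E_{\mbf{i,a}} - \dim \overline{\{y\}}$ on fibres over generic points of strata, together with properness, forces the pushforward of any perverse sheaf on $\mathcal E_{\mbf{i,a}}$ to again be perverse on $E_V$. Applied to the simple perverse sheaf $R \in \widehat{\mathcal E}_{\mbf{i,a}}$, this yields that $(\psi_{\mbf{i,a}})_!(R)$ is perverse. The decomposition theorem of \cite{BBD} further gives that $(\psi_{\mbf{i,a}})_!(R)$ is a direct sum of shifts of simple perverse sheaves; combined with the perversity just established, all shifts must vanish, so $(\psi_{\mbf{i,a}})_!(R)$ is semisimple perverse.

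For part (2), the hypothesis $l_i \geq 2$ for all $i \in I^-$ upgrades $\psi_{\mbf{i,a}}$ to a small resolution. I would use the following refinement of the above: smallness strengthens the dimension estimate to a strict inequality outside an open subset $U \subseteq E_V$ over which $\psi_{\mbf{i,a}}$ is an isomorphism. The strict inequalities ensure that $(\psi_{\mbf{i,a}})_!(R)$ agrees with the intermediate extension of its restriction to $U$. Since $R$ is simple, $R|_{\psi_{\mbf{i,a}}^{-1}(U)}$ is an irreducible local system placed in a single perverse degree, so its intermediate extension is a simple perverse sheaf on $E_V$.

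The main obstacle is carefully tracking that the global (semi)small property of $\psi_{\mbf{i,a}}$ transfers to the restriction of $\psi_{\mbf{i,a}}$ to each irreducible component of the support of a constituent $R \in \widehat{\mathcal E}_{\mbf{i,a}}$, since a priori smallness is not inherited by closed subvarieties. Because $R$ arises as a summand of $(\phi_{\mbf{i,a}})_! \bar{\mathbb Q}_l[\dim \widetilde{\mathcal F}_{\mbf{i,a}}]$, its support is a closed $G_V$-invariant subvariety of $\mathcal E_{\mbf{i,a}}$ obtained as the image of a smooth irreducible variety under the proper map $\phi_{\mbf{i,a}}$, which allows one to bound the fibre dimensions of $\psi_{\mbf{i,a}}|_{\mrm{supp}\, R}$ in terms of those of $\psi_{\mbf{i,a}}$ itself. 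This dimension bookkeeping is precisely what is carried out in \cite{KS} and \cite{KSaddendum}.
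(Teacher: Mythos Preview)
Your final paragraph correctly identifies the heart of the matter, but the preceding paragraphs contain an overstatement that, if left uncorrected, constitutes a genuine gap.

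The assertion in part (1) that semismallness of $\psi_{\mbf{i,a}}$ ``forces the pushforward of any perverse sheaf on $\mathcal E_{\mbf{i,a}}$ to again be perverse'' is false. Semismallness of a proper map $f:X\to Y$ is a fibre-dimension bound calibrated to $\dim X$; it guarantees that $f_!(\bar{\mathbb Q}_l[\dim X])$ (or $f_!\,IC_X$) is perverse, not that $f_!$ is $t$-exact on all of $\mathcal M(X)$. A blowup of a point in a smooth surface is semismall, yet pushing forward the $IC$ sheaf of the exceptional $\mathbb P^1$ yields a complex sitting in perverse degrees $\pm 1$. The analogous issue afflicts your part (2): $R|_{\psi^{-1}(U)}$ need not be an irreducible local system on $\psi^{-1}(U)$, since $\mrm{supp}(R)$ may be a proper closed subvariety and might not even meet $\psi^{-1}(U)$.

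What is actually required---and what \cite{KS}, \cite{KSaddendum} carry out---is precisely the stratum-by-stratum dimension estimate on $\mrm{supp}(R)$ that you allude to at the end: one must show that $\psi_{\mbf{i,a}}$ restricted to the support of each $R\in\widehat{\mathcal E}_{\mbf{i,a}}$ is itself semismall (resp.\ small) onto its image, with the correct dimension shift. The paper does not reproduce this computation; it observes that the argument for $\mbf a$ with all entries $1$ in \cite{KS}, \cite{KSaddendum} goes through verbatim for general $\mbf a$. So your last paragraph is not merely an obstacle to be acknowledged---it \emph{is} the proof, and the earlier shortcut via global semismallness should be dropped.
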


It is proved in ~\cite{lusztig3} when $Q$ has one vertex and multiple loops.
When the entries in $\mbf a$ are $1$, 
the Lemma is proved in ~\cite[Proposition 4.1]{KS} and ~\cite[Section 1]{KSaddendum}. For arbitrary 
$\mbf a$, one can repeat the proof in ~\cite{KS} and ~\cite{KSaddendum} essentially word by
word.

Let $\mathcal R_V$ be the set of isomorphism classes of the
$semisimple$ perverse sheaves of the form: 
$(\psi_{\mbf{i,a}})_!(R)$ where 
$R\in \widehat{\mathcal  E}_{\mbf{i,a}}$ 
and $(\mbf{i,a}) \in \mathcal X_{\nu}$.
 
Let $\mathcal S_V$ be the full subcategory of $\mathcal D(E_V)$ whose
objects are finite direct sums of shifts of the semisimple perverse sheaves
coming from $\mathcal R_V$.

Similar to Section ~\ref{simple}, let
$\mathcal S_T \boxtimes \mathcal S_W$ be the full subcategory of
$\mathcal D(E_T\times E_W)$ whose objects are of the form 
$S'\boxtimes S''$ for any $S'\in \mathcal S_T$ and $S'' \in \mathcal S_W$.
Observe that 
\[
L_{\mbf{i,a}}=(\psi_{\mbf{i,a}})_!(L^-_{\mbf{i,a}}),
\]
one sees that $(\psi_{\mbf{i,a}})_!(R)$ is a direct sum of simple
perverse sheaves in $\mathcal P_V$.
Thus $\mathcal S_V$ (resp. $\mathcal S_T \boxtimes \mathcal S_W$) is a
full subcategory of $\mathcal Q_V$
(resp. $\mathcal Q_T \boxtimes \mathcal  Q_W$).

\subsection 
{Certain classes of simple perverse sheaves 
on $\mathcal  E_{\mbf{i,a}}$  and $\mathcal E_{\mbf{i,a}}^+$}
\label{certain}

Let $\mathcal Y_{\nu}$ be the  subset of $\mathcal X_{\nu}$ (\ref{flag})
consisting of all pairs $(\mbf{i,a})$ such that the (imaginary) entries in $\mbf a$ are 1.
For simplification, we write $\mbf i$ for $(\mbf{i,a})$.

For any $\mbf i\in \mathcal Y_{\nu}$, 
the variety $\mathcal E_{\mbf  i}$ can be identified with the variety
\[
\mathcal E_{\nu}=\{(x,\Pi_{i\in I^-} D_i^{\bullet}) \in E_V\times
\Pi_{i\in I^-}\mathcal F_{(i,\cdots,i)} \; |\; x_{\omega}(D_i^{l-1}) \subseteq D_i^l\;
\forall \omega\in \Omega(i), 1\leq l\leq \nu_i\} 
\] 
where $\mathcal F_{(i,\cdots,i)}$ is the full flag variety of $V_i$, 
$\dim V_i=\nu_i$ and $\Omega(i)$ is defined in Section ~\ref{quivers}.
(Note that  the varieties $\mathcal E_{\mbf{i,a}}$ in Section
~\ref{factorization} depend on the pair $\mbf{(i,a)} \in \mathcal
X_{\nu}$ in general.)

Now the diagram in Section ~\ref{relation} reads (for $\mbf i\in
\mathcal Y_{\nu}$)
\[
\begin{CD}
\widetilde{\mathcal F}_{\mbf i} @>\phi_{\mbf i}>> \mathcal E_{\nu} @>\psi_{\nu}>> E_V\\
@V\gamma_{\mbf i}VV @V\delta_{\nu}VV @V\epsilon VV\\
\widetilde{\mathcal F}_{\mbf i}^+ @>\phi_{\mbf i}^+>> 
\mathcal E_{\nu}^+ @>\psi_{\nu}^+>> E_{V,\Omega^+},
\end{CD}
\]
where $\mathcal E^+_{\nu}=E_{V,\Omega^+}\times \Pi_{i\in I^-} \mathcal F_{(i,\cdots,i)}$,
$\psi_{\nu}$ and $\psi_{\nu}^+$ are first projections.

Similar to $\widehat{\mathcal E}_{\mbf{i,a}}$, 
$\mathcal R_V$ and $\mathcal S_V$ defined in Section
~\ref{semisimple}, we define the following objects.

$\widehat{\mathscr E}_{\nu}$
is the set of isomorphism
classes of all simple perverse sheaves on $\mathcal E_{\nu}$ 
appearing as direct summands in
the complex  $L^-_{\mbf i}$ with possible shifts for all 
$\mbf i\in \mathcal Y_{\nu}$.

$\mathscr R_V$ is  the set of isomorphism classes of the
semisimple perverse sheaves of the form: 
$(\psi_{\nu})_!(R)$ for all
$R\in \widehat{\mathscr  E}_{\nu}$.
 
$\mathscr S_V$ be the full subcategory of $\mathcal D(E_V)$ whose
objects are finite direct sums of shifts of the semisimple perverse sheaves
coming from $\mathscr R_V$.

Finally, we define the following objects.

Let $\widehat{\mathscr F}_{\nu}$ be the 
full subcategory of $\mathcal D(\mathcal E_{\nu})$ whose
objects are finite direct sums of the simple perverse sheaves
coming from $\widehat{\mathscr E}_{\nu}$.

Let 
$\widehat{\mathscr E}_{\nu}^+$
be the set of isomorphism
classes of all simple perverse sheaves on $\mathcal E_{\nu}^+$ 
appearing as direct summands in
the complex  $(\phi_{\mbf i}^+)_!(\bar{\mathbb Q}_l)$ with possible shifts for all 
$\mbf i\in \mathcal Y_V$.

Let $\widehat{\mathscr F}_{\nu}^+$ be the 
full subcategory of $\mathcal D(\mathcal E_{\nu}^+)$ whose
objects are finite direct sums of shifts of  the semisimple perverse sheaves
coming from $\widehat{\mathscr E}_{\nu}^+$.

These objects will be used in Section ~\ref{newalgebra} to construct
various algebras.

\section{Induction functors}
\label{induction}

\subsection{Induction functors on $E_V$}
\label{inductionA}
Let $W \subseteq V$ be an $I$-graded subspace. 
Let $T=V/W$ and $p:V\to T$ the natural projection. We sometimes write
$p_T$ to avoid confusion.

Given any $x\in E_V$,
we write $x(W)\subseteq W$  if 
$x_{\omega}(W_{s(\omega)})\subseteq W_{t(\omega)}$, for all $\omega\in \Omega$.

If $x(W)\subseteq W$, it induces two elements $x_W$ and $x_T$ in $E_W$ and
$E_T$ respectively as follows. 
$(x_W)_{\omega}$ is the restriction of $x_{\omega}$ to $W$ 
for all $\omega\in \Omega$. 
$(x_T)_{\omega}$ is defined such that
$p_{t(\omega)}\, x_{\omega}=(x_T)_{\omega}\, p_{s(\omega)}$ for all $\omega\in \Omega$.

We consider the following diagram
\[
\label{ind}
\begin{CD}
E_T\times E_W @<q_1<< E' @>q_2>> E'' @>q_3>> E_V
\end{CD}
\]
where
$E''= \{ (x,V')\; |\; x(V')\subseteq V', \dim V'=\dim W\}$, and 
$E'$ is the variety consisting of all quadruples $ (x, V', r', r'')$ such that
$(x,V') \in E''$,  $r': V/V' \to T$ and  $r'':V' \to W$ are I-graded linear isomorphisms.

The maps are defined as follows. 
$q_3: (x,V') \mapsto x$, $q_2: (x, V', r', r'') \mapsto (x, V')$
and $q_1: (x, V', r', r'') \mapsto (x', x'')$ with 
$x'_{\omega}=r'_{t(\omega)}\,  (x_{V/V'})_{\omega} \, (r')^{-1}_{s(\omega)}$ and 
$x''_{\omega}=r''_{t(\omega)}\, (x_{V'})_{\omega} \,
(r'')^{-1}_{s(\omega)}$  for all $\omega\in \Omega$.

It is well-known that $q_3$ is proper,
$q_2$ is principal $G_T\times G_W$ bundle of fiber
dimension $d_2=\sum_{i\in I} (|T_i|^2+ |W_i|^2)$
and $q_1$ is smooth with connected fibers of fiber dimension 
$d_1=\sum_{i\in I} (|T_i|^2+|W_i|^2) +\sum_{\omega\in \Omega}
|T_{s(\omega)}|\,|W_{t(\omega)}| +\sum_{i\in I} |T_i|\,|W_i|$.
Here we use the notation
\[
|V|=\dim V.
\]
From the diagram in this section and the above properties, 
we have a functor 
\[
(q_{3})_!\; (q_{2})_{\flat}\; q_1^*: \mathcal Q_T\boxtimes \mathcal
Q_W \to \mathcal D(E_V).
\]
Given any $K_1\in \mathcal Q_T$ and $K_2 \in \mathcal Q_W$ (see ~\ref{simple}), we set
\[
K_1\circ K_2= (q_{3})_!\; (q_{2})_{\flat}\; q_1^* (K_1 \boxtimes K_2) \, [d_1-d_2].
\]

\begin{lem}
\label{ind1}
\begin{enumerate}

\item $K_1 \circ K_2 \in \mathcal Q_V$.

\item $K_1 \circ K_2 \in \mathcal S_V$ if $K_1\in \mathcal S_T$ and 
      $K_2\in \mathcal S_W$ (see ~\ref{semisimple}).
 
\item $L_{\mbf{i',a'}} \circ L_{\mbf{i'',a''}}= L_{\mbf{i'i'',a'a''}}$, 
      where $(\mbf{i',a'})\in \mathcal X_{|T|}$, 
      $(\mbf{i'',a''}) \in \mathcal X_{|W|}$, and $(\mbf{i'i'',a'a''}) \in
      \mathcal X_{|V|}$ is the concatenation of the two sequences.
\end{enumerate}
\end{lem}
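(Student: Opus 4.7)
Claim (3) is the core identity; (1) and (2) will be deduced from it by general summand-and-shift arguments. Throughout, the operation $\circ=(q_3)_!(q_2)_\flat q_1^*[d_1-d_2]$ commutes with direct sums and shifts, and preserves direct summands of semisimple complexes, the latter via the Decomposition theorem applied to the proper map $q_3$.

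For (3), the plan is to realize $\widetilde{\mathcal F}_{\mbf{i'i'',a'a''}}$ as a fibration over the induction correspondence of Section~\ref{inductionA}. Given an $x$-stable flag of type $(\mbf{i'i'',a'a''})$ in $V$, the intermediate subspace $V^{|\mbf{a'}|}$ has graded dimension $|W|$ and is automatically $x$-stable; the quotient flag on $V/V^{|\mbf{a'}|}$ is $x_{V/V^{|\mbf{a'}|}}$-stable of type $(\mbf{i',a'})$, and the lower portion on $V^{|\mbf{a'}|}$ is $x_{V^{|\mbf{a'}|}}$-stable of type $(\mbf{i'',a''})$. Assembled with the trivializations $r',r''$ that define $E'$, this construction produces a commutative diagram sitting over $E_T\times E_W \xleftarrow{q_1} E' \xrightarrow{q_2} E'' \xrightarrow{q_3} E_V$, whose outer squares are cartesian and whose top row ends with $\widetilde{\mathcal F}_{\mbf{i',a'}}\times \widetilde{\mathcal F}_{\mbf{i'',a''}}$ on the left and $\widetilde{\mathcal F}_{\mbf{i'i'',a'a''}}$ on the right, with the right-most vertical map being $\pi_{\mbf{i'i'',a'a''}}$. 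Proper base change along $q_3$, the principal-bundle equivalence $q_{2\flat}$, pullback along the smooth $q_1$, and a bookkeeping of the shifts $d_1-d_2$ then yield $L_{\mbf{i',a'}}\circ L_{\mbf{i'',a''}}=L_{\mbf{i'i'',a'a''}}$.

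Claim (1) now follows formally: every object of $\mathcal Q_T$ is, by construction, a direct summand of a finite direct sum of shifts of $L_{\mbf{i',a'}}$'s, and similarly for $\mathcal Q_W$, so $K_1\circ K_2$ is a direct summand of shifts of $L_{\mbf{i'i'',a'a''}}$ by (3), and hence lies in $\mathcal Q_V$. For (2), a refinement is needed: I would introduce a parallel induction functor $\circ^{\mathcal E}$ on the intermediate varieties $\mathcal E_{\mbf{i,a}}$, built from the analogous correspondence but using only loop-stability data, and establish the intertwining
\[
(\psi_{\mbf{i',a'}})_!(R')\circ(\psi_{\mbf{i'',a''}})_!(R'')=(\psi_{\mbf{i'i'',a'a''}})_!\bigl(R'\circ^{\mathcal E}R''\bigr),
\]
together with the $\mathcal E$-level analog $L^-_{\mbf{i',a'}}\circ^{\mathcal E}L^-_{\mbf{i'',a''}}=L^-_{\mbf{i'i'',a'a''}}$, proved exactly as in (3). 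This realizes $K_1\circ K_2$ as a direct summand of shifts of sheaves of the form $(\psi_{\mbf{i'i'',a'a''}})_!(R)$ with $R\in\widehat{\mathcal E}_{\mbf{i'i'',a'a''}}$, placing it in $\mathcal S_V$. The main obstacle will be constructing this $\mathcal E$-level diagram so that all squares used to define $\circ^{\mathcal E}$ remain cartesian over their $E$-level counterparts after $\psi$ is applied; ultimately this reduces to checking that the loop-stability constraint cutting out $\mathcal E_{\mbf{i,a}}$ is compatible with the choice of the $|W|$-dimensional $x$-stable subspace used in (3), together with proper base change for $\psi$ on each piece.
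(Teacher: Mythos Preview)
Your proposal is correct and follows precisely the approach the paper adopts (via its references and its own Lemma~\ref{LemmaA}): parts (1) and (3) are the standard Lusztig argument from \cite{lusztig2} that you describe, and for (2) the $\mathcal E$-level induction correspondence you anticipate is exactly the paper's Diagram~(A), with your intertwining identity being the content of Lemma~\ref{LemmaA}. The only refinement to note is that in Diagram~(A) it is the \emph{left and middle} squares that are cartesian (the right one merely commutes, which suffices since only $(q_3)_!(\psi'')_!=(\psi_{\mbf{i,a}})_!(r_3)_!$ is needed there); your closing remark about compatibility of the loop-stability constraint with the chosen $|W|$-dimensional subspace is encoded in the condition ``$V'\in\mbf D^{\bullet}$'' in the definition of $\mathcal E''$.
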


Lemma ~\ref{ind1} (1) and (3) are proved in ~\cite{lusztig2}. Lemma
~\ref{ind1} (2) is proved in ~\cite{KS}, but see Lemma ~\ref{LemmaA}.

\subsection{Compatibility of induction functors on 
$E_V$, $\mathcal E_{\mbf{i,a}}$ and $\mathcal E_{\mbf{i,a}}^+$}
\label{inductionB}

In this section, we assume that $(\mbf  i, \mbf a)$ is the concatenation of the pairs $(\mbf i', \mbf a')$ and $(\mbf i'', \mbf a'')$.
Consider the following diagram
\[
\label{A}
\begin{CD}
\mathcal E_{\mbf{i',a'}}\times \mathcal E_{\mbf{i'',a''}} @<r_1<<
\mathcal E' @>r_2>> \mathcal E'' @>r_3>> \mathcal E_{\mbf{i,a}}\\
@V\psi_{\mbf{i',a'}}\times\psi_{\mbf{i'',a''}}VV @V\psi'VV @V\psi''VV @V\psi_{\mbf{i,a}}VV\\
E_T\times E_W @<q_1<< E' @>q_2>> E'' @>q_3>> E_V
\end{CD}
\tag{A}
\]
where the bottom row is the diagram defined in 
Section ~\ref{inductionA}, $\mathcal E_{\mbf{i,a}}$ is defined in 
Section  ~\ref{factorization}.
\[
\mathcal E''= \{ (x, \mbf D^{\bullet}; V')\; |\; 
(x,\mbf D^{\bullet})\in \mathcal E_{\mbf{i,a}},  x(V')\subseteq V',
V'\in \mbf D^{\bullet}, \dim V'=\dim W\},
\] 
where $V'$ is an $I$-graded subspace of $V$, and $V'\in \mbf D^{\bullet}$ means that 
$D^l\supseteq V'\cap V^- \supseteq D^{l+1}$ for some $l$ and 
\[
\mathcal E'= \{ (x, \mbf D^{\bullet}; V'; r', r'')\; |\; 
(x,\mbf D^{\bullet}; V' )\in \mathcal E'',  r': V/V' \tilde{\to} T,
r'':V' \tilde{\to} W \}.
\] 
The vertical morphisms are the obvious projections and 
\begin{align*}
&r_3: (x, \mbf D^{\bullet}; V') \mapsto (x,\mbf D^{\bullet}),\quad
r_2: (x,\mbf D^{\bullet}; V'; r',r'') \mapsto (x, \mbf D^{\bullet}; V'),
\; \mbox{and}\\
&r_1: (x, \mbf D^{\bullet}; V'; r',r'') \mapsto 
((x', \mbf D'^{\bullet}), (x'', \mbf D''^{\bullet})),
\end{align*}
where $(x',x'')$ is defined in Section ~\ref{inductionA},  
$\mbf D''^{\bullet}=r'' (\mbf D^{\bullet} \cap V')$,
and
$\mbf D'^{\bullet}=r'(p_T( \mbf D^{\bullet}))$.
(Note that the imaginary parts of the pairs $(\mbf{i',a'})$ and $(\mbf{i'',a''})$ are
uniquely determined by the types of the flags $p_T( \mbf D^{\bullet})$ and
$\mbf D^{\bullet} \cap V'$, respectively, although 
$(\mbf{i',a'})$ and $(\mbf{i'',a''})$ are not.)

\begin{lem} [\cite{KS}]
\label{LemmaA}
For any $R_1\in \widehat{\mathcal E}_{\mbf{i',a'}}$ and 
$R_2\in \widehat{\mathcal E}_{\mbf{i'',a''}}$ (see Section ~\ref{semisimple}), we have 
\begin{align*}
(\psi_{\mbf{i',a'}})_! (R_1) \circ (\psi_{\mbf{i'',a''}})_!( R_2)=
(\psi_{\mbf{i,a}})_!(r_3)_! (r_2)_{\flat} r_1^*(R_1\boxtimes R_2)[d_1-d_2].
\end{align*}
\end{lem}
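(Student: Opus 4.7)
The plan is to chase the identity through diagram (A) one square at a time, using the standard toolkit of K\"unneth, proper base change, and the compatibility of the inverse equivalence $(-)_\flat$ with proper pushforward along a cartesian square. Unfolding the definition of $\circ$ and applying K\"unneth to rewrite $(\psi_{\mbf{i',a'}})_!(R_1)\boxtimes (\psi_{\mbf{i'',a''}})_!(R_2)$ as $(\psi_{\mbf{i',a'}}\times\psi_{\mbf{i'',a''}})_!(R_1\boxtimes R_2)$, the left-hand side becomes
\[
(q_3)_!\,(q_2)_{\flat}\, q_1^{*}\,(\psi_{\mbf{i',a'}}\times\psi_{\mbf{i'',a''}})_!(R_1\boxtimes R_2)\,[d_1-d_2].
\]
I will move the outer functors past $q_1^{*}$, $(q_2)_{\flat}$, and $(q_3)_{!}$ in turn.

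First I would check that the leftmost square in (A) is cartesian. A point of the fibre product $E'\times_{E_T\times E_W}(\mathcal E_{\mbf{i',a'}}\times\mathcal E_{\mbf{i'',a''}})$ is $(x,V',r',r'')$ together with compatible flags $\mbf D'^{\bullet}$ on $T$ and $\mbf D''^{\bullet}$ on $W$; since $(\mbf{i,a})$ is the concatenation of $(\mbf{i',a'})$ and $(\mbf{i'',a''})$, the preimage of $(r')^{-1}(\mbf D'^{\bullet})$ in $V^-$ followed by $(r'')^{-1}(\mbf D''^{\bullet})$ assembles uniquely into an $x$-stable flag $\mbf D^{\bullet}$ on $V^-$ with $V'\in\mbf D^{\bullet}$, yielding a point of $\mathcal E'$. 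Since $\psi_{\mbf{i',a'}}\times\psi_{\mbf{i'',a''}}$ is proper (each factor factors through the proper map $\pi_{\bullet}$ via $\phi_{\bullet}$), proper base change gives $q_1^{*}(\psi_{\mbf{i',a'}}\times\psi_{\mbf{i'',a''}})_! = (\psi')_!\,r_1^{*}$. A similar inspection shows that the middle square is cartesian: $\mathcal E'$ is tautologically the pullback of $\mathcal E''$ under the map $E'\to E''$ forgetting $(r',r'')$. Because $q_2$ is a principal $G_T\times G_W$-bundle, so is $r_2$; combining the cartesian square with proper base change for the proper map $\psi''$ (its fibres lying in a closed subvariety of a product of flag varieties) yields $q_2^{*}(\psi'')_! = (\psi')_!\,r_2^{*}$, and inverting the equivalence $q_2^{*}[d_2]$ produces the exchange $(q_2)_{\flat}(\psi')_! = (\psi'')_!(r_2)_{\flat}$ on the $G_T\times G_W$-equivariant complex $r_1^{*}(R_1\boxtimes R_2)$.

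Finally, the rightmost square commutes on the nose, since both $q_3\psi''$ and $\psi_{\mbf{i,a}}\,r_3$ send $(x,\mbf D^{\bullet},V')$ to $x$; this is all that is needed for the functorial identity $(q_3)_!(\psi'')_! = (\psi_{\mbf{i,a}})_!(r_3)_!$ (the square itself is not cartesian, but this does not matter here). Stringing the three equalities together, and remembering the overall shift $[d_1-d_2]$, yields the claimed formula. The main obstacle is the middle step: verifying that the interchange of $(q_2)_{\flat}$ and $(\psi')_!$ is legitimate requires both the cartesianness of the middle square and the properness of $\psi''$, which must be checked against the slightly awkward definition of $\mathcal E''$ that builds in the incidence condition $V'\in\mbf D^{\bullet}$. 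The other ingredients are routine provided one keeps careful track of the shifts introduced by $(-)_{\flat}$ and by the smooth/principal-bundle maps $q_1$ and $q_2$.
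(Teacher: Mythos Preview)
Your proof is correct and follows essentially the same route as the paper: unfold the definition of $\circ$, use that the left and middle squares of diagram (A) are cartesian together with proper base change to pass from the $q$-row to the $r$-row, and finish with the commutativity of the right square to rewrite $(q_3)_!(\psi'')_!$ as $(\psi_{\mbf{i,a}})_!(r_3)_!$. The paper's argument is the same four-line chain of equalities, just stated more tersely.

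One small correction: your justification that $\psi_{\mbf{i',a'}}$ is proper because it ``factors through the proper map $\pi_{\bullet}$ via $\phi_{\bullet}$'' is backwards, since $\pi=\psi\phi$ means $\pi$ factors through $\psi$, not the other way around, and properness does not descend along surjections in general. The correct reason is direct: $\mathcal E_{\mbf{i,a}}$ is a closed subvariety of $E_V\times\mathcal F_{(\mbf{i^-,a^-})}$ and $\psi_{\mbf{i,a}}$ is the restriction of the first projection, which is proper because the flag variety is projective. This does not affect the rest of your argument.
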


\begin{proof}
Observe that all squares in the diagram above are commutative. Moreover,
the left and the middle squares are cartesian. By base change for proper morphism, we have
\begin{align*}
(\psi_{\mbf{i',a'}})_! (R_1) \circ (\psi_{\mbf{i'',a''}})_!( R_2)
&=(q_3)_! (q_2)_{\flat} (q_1)^* (\psi_{\mbf{i',a'}} \times
\psi_{\mbf{i'',a''}})_!(R_1\boxtimes R_2) [d_1-d_2]\\
&=(q_3)_! (q_2)_{\flat} (\psi')_! (r_1)^*(R_1\boxtimes R_2) [d_1-d_2]\\
&=(q_3)_! (\psi'')_! (r_2)_{\flat}(r_1)^*(R_1\boxtimes R_2) [d_1-d_2]\\
&=(\psi_{\mbf{i,a}})_! (r_3)_!(r_2)_{\flat}(r_1)^*(R_1\boxtimes R_2) [d_1-d_2].
\end{align*}
Lemma follows.
\end{proof}
We set 
$R_1\circ R_2=(r_3)_! (r_2)_{\flat} r_1^*(R_1\boxtimes R_2)[d_1-d_2]$. Then the
identity in Lemma above reads
\begin{equation}
\label{equationa}
(\psi_{\mbf{i',a'}})_! (R_1) \circ (\psi_{\mbf{i'',a''}})_!( R_2)=
(\psi_{\mbf{i,a}})_!(R_1\circ R_2).
\end{equation}

Similarly, consider the diagram
\[
\label{B}
\begin{CD}
\mathcal E_{\mbf{i',a'}}\times \mathcal E_{\mbf{i'',a''}} @<r_1<<
\mathcal E' @>r_2>> \mathcal E'' @>r_3>> \mathcal E_{\mbf{i,a}}\\
@V\delta_{\mbf{i',a'}}\times\delta_{\mbf{i'',a''}}VV @VVV @VVV @V\delta_{\mbf{i,a}}VV\\
\mathcal E_{\mbf{i',a'}}^+\times \mathcal E_{\mbf{i'',a''}}^+ @<r_1^+<<
\mathcal E'^+ @>r_2^+>> \mathcal E''^+ @>r_3^+>> \mathcal
E_{\mbf{i,a}}^+,
\end{CD}
\tag{B}
\]
where the top row is the top row in Diagram ~\ref{A}, $\mathcal
E^+_{\mbf{i,a}}$ is defined in Section ~\ref{relation}, the objects
$\mathcal E'^+$ and $\mathcal E''^+$ are defined in the same way as
the varieties $\mathcal E'$ and $\mathcal E''$
with $x\in E_V$ replaced by $x\in E_{V,\Omega^+}$, and the vertical
maps are projections. 

Observe that the squares in Diagram ~\ref{B} are commutative and the
right and middle squares are cartesian. 
By using a similar argument in the Proof of Lemma ~\ref{LemmaA}, one has
\begin{align*}
\delta^*_{\mbf{i',a'}}(S_1^+) \circ \delta^*_{\mbf{i'',a''}}(S_2^+)=
\delta^*_{\mbf{i,a}}(r_3^+)_! (r_2^+)_{\flat} (r_1^+)^*(S_1^+ \boxtimes S_2^+)[d_1-d_2],
\end{align*}
for any semisimple complex 
$S_1^+ \in \mathcal D(\mathcal E^+_{\mbf{i',a'}})$ and 
$S_2^+\in \mathcal D(\mathcal E^+_{\mbf{i'',a''}})$, 
which are, up to shifts, direct summands of $(\phi_{\mbf{i',a'}}^+)_!(\bar{\mathbb Q}_l)$ and 
$(\phi_{\mbf{i'',a''}}^+)_!(\bar{\mathbb Q}_l)$ respectively. 
For simplicity, we set 
$S_1^+\circ S_2^+=(r_3^+)_! (r_2^+)_{\flat} (r_1^+)^*(S_1^+ \boxtimes
S_2^+)[d_1^+-d_2]$, where $d^+_1$ is the fibre dimension of $r_1^+$.
(Note that the fibre dimension of $r_1$ and $r_1^+$ are not the same
in general.)
Then the above identity  reads
\begin{align}
\label{equationb}
\delta^*_{\mbf{i',a'}}(S_1^+) \circ \delta^*_{\mbf{i'',a''}}(S_2^+)=
\delta^*_{\mbf{i,a}}(S_1^+\circ S_2^+)[d_1^+ - d_1].
\end{align}
Let $d_{\mbf{i,a}}$ be the fibre dimension of
$\delta_{\mbf{i,a}}$. By direct computation, 
\[
d_{\mbf{i,a}}=d_1-d_1^++d_{\mbf{i',a'}}+d_{\mbf{i'',a''}}.
\]
From this and equation (\ref{equationb}), one has

\begin{lem}
\label{LemmaB}
For any semisimple complex $S_1^+ \in \mathcal D(\mathcal
E^+_{\mbf{i',a'}})$ and $S_2^+\in \mathcal D(\mathcal
E^+_{\mbf{i'',a''}})$ which are shifts of direct summands of 
$(\phi_{\mbf{i',a'}})_!(\bar{\mathbb Q}_l)$ and
$(\phi_{\mbf{i'',a''}})_!(\bar{\mathbb Q}_l)$ respectively,
\begin{align}
\delta^*_{\mbf{i',a'}}[d_{\mbf{i',a'}}](S_1^+) \circ 
\delta^*_{\mbf{i'',a''}}[d_{\mbf{i'',a''}}](S_2^+)=
\delta^*_{\mbf{i,a}}[d_{\mbf{i,a}}] (S_1^+\circ S_2^+).
\end{align}
\end{lem}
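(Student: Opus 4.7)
The plan is to deduce Lemma~\ref{LemmaB} as a purely formal consequence of equation~(\ref{equationb}) together with the fibre-dimension identity $d_{\mbf{i,a}} = d_1 - d_1^+ + d_{\mbf{i',a'}} + d_{\mbf{i'',a''}}$ recorded immediately above. Its content is simply that once each $\delta^*$ is renormalized by the appropriate fibre-dimension shift to become the perverse pullback, the resulting operation commutes with the convolution~$\circ$; the dimension identity is precisely the numerical input that makes the shifts line up.

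Concretely, the convolution~$\circ$ in either row of diagram~(B) is built from $(r_?)_!$, $(r_?)_{\flat}$, $(r_?)^*$ and a single overall shift, and each of these operations is $\mathbb Z$-linear in the shift degree of its input. Hence the shifts $[d_{\mbf{i',a'}}]$ and $[d_{\mbf{i'',a''}}]$ on the left-hand side of the desired identity can be pulled outside the convolution, yielding
\[
\bigl(\delta^*_{\mbf{i',a'}}(S_1^+)\circ\delta^*_{\mbf{i'',a''}}(S_2^+)\bigr)\bigl[d_{\mbf{i',a'}}+d_{\mbf{i'',a''}}\bigr].
\]
Applying~(\ref{equationb}) to the parenthesized factor then rewrites this as $\delta^*_{\mbf{i,a}}(S_1^+\circ S_2^+)$ carrying a single total shift that depends only on $d_1$, $d_1^+$, $d_{\mbf{i',a'}}$ and $d_{\mbf{i'',a''}}$.

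The remaining step is to verify that this total shift equals $d_{\mbf{i,a}}$, which is immediate from the recorded dimension identity. The only substantive ingredient is therefore the dimension identity itself, and its ``direct computation'' is a bookkeeping exercise: the fibre dimension of each of $\delta_{\mbf{i,a}}$, $\delta_{\mbf{i',a'}}$, $\delta_{\mbf{i'',a''}}$ is a sum of loop contributions $\bigoplus_{\omega\in\Omega^-}\mathrm{Hom}$ encoded in the corresponding flag-stability conditions, while the difference $d_1-d_1^+$ between the fibre dimensions of $r_1$ and $r_1^+$ records the parallel loop contribution in the induction data. I expect no real obstacle; the lemma is essentially a shift-counting computation built on top of the base-change derivation that already established~(\ref{equationb}).
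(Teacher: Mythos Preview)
Your proposal is correct and is exactly the argument the paper gives: the lemma is derived from equation~(\ref{equationb}) by pulling the shifts through the convolution and then invoking the fibre-dimension identity $d_{\mbf{i,a}} = d_1 - d_1^+ + d_{\mbf{i',a'}} + d_{\mbf{i'',a''}}$ to match up the total shift. There is nothing to add beyond what you have written.
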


Finally, consider the following commutative diagram
\[
\begin{CD}
\label{C}
\mathcal E_{\mbf j'}^+\times \mathcal E_{\mbf j''}^+ @<r_{\mbf j, 1}^+<<
\mathcal E_{\mbf j}'^+ @>r_{\mbf j, 2}^+>> \mathcal E_{\mbf j}''^+
@>r_{\mbf j, 3}^+>> \mathcal E_{\mbf j}^+,\\
@V\beta_{\mbf{i',a'}}\times \beta_{\mbf{i'',a''}}VV 
@VVV @VVV @V \beta_{\mbf{i,a}}VV\\
\mathcal E_{\mbf{i',a'}}^+\times \mathcal E_{\mbf{i'',a''}}^+ @<r_1^+<<
\mathcal E'^+ @>r_2^+>> \mathcal E''^+ @>r_3^+>> \mathcal
E_{\mbf{i,a}}^+,
\end{CD}
\tag{C}
\]
where 
the sequence $\mbf j$ is defined in Section ~\ref{bridge} with respect
to the sequence $(\mbf{i,a})$, the bottom row is defined in Diagram (B),
and the top row is defined the same as the bottom row with the pair
$(\mbf{i,a})$ replaced by $\mbf j$. The vertical maps are obvious
projections. To avoid confusion, we put a subscript $\mbf j$ on the
morphisms and the varieties in the middle in the top row. 
Note that again all squares are commutative, and the left and middle
squares are cartesian.

\begin{lem}
\label{LemmaC}
For any semisimple complex 
$R_1^+ \in \mathcal D(\mathcal E^+_{\mbf{i',a'}})$ and
$R_2^+ \in \mathcal D(\mathcal E^+_{\mbf{i'',a''}})$
which are shifts of direct summands of 
$(\phi_{\mbf{i',a'}})_!(\bar{\mathbb Q}_l)$ and
$(\phi_{\mbf{i'',a''}})_!(\bar{\mathbb Q}_l)$ respectively,, 
\begin{align}
\label{equationc}
(\beta_{\mbf{i,a}})_!(R_1^+\circ R_2^+)
=(\beta_{\mbf{i',a'}})_!(R_1^+)\circ
(\beta_{\mbf{i'',a''}})_!(R_2^+).
\end{align}
\end{lem}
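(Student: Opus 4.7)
The plan is to mirror the proof of Lemma~\ref{LemmaA} verbatim, now applied to Diagram~(C). I would begin from the right-hand side $(\beta_{\mbf{i',a'}})_!(R_1^+)\circ(\beta_{\mbf{i'',a''}})_!(R_2^+)$, unfold the definition of $\circ$ along the bottom row of (C) to obtain $(r_3^+)_!(r_2^+)_\flat(r_1^+)^*\bigl((\beta_{\mbf{i',a'}})_!(R_1^+)\boxtimes(\beta_{\mbf{i'',a''}})_!(R_2^+)\bigr)[d_1^+-d_2]$, and then use the K\"unneth formula to replace the external tensor product with $(\beta_{\mbf{i',a'}}\times\beta_{\mbf{i'',a''}})_!(R_1^+\boxtimes R_2^+)$, pulling the two $\beta$-pushforwards past the exterior product.

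Next I would march the combined pushforward leftward across Diagram~(C). Proper base change on the cartesian left square yields $(r_1^+)^*(\beta_{\mbf{i',a'}}\times\beta_{\mbf{i'',a''}})_! = (\beta')_!(r^+_{\mbf j,1})^*$, where $\beta'$ denotes the middle-left vertical arrow. Base change for $(-)_\flat$ on the cartesian middle square gives $(r_2^+)_\flat(\beta')_! = (\beta'')_!(r^+_{\mbf j,2})_\flat$, using that $r_2^+$ and $r^+_{\mbf j,2}$ are principal $G_T\times G_W$-bundles whose bundle structures are identified through the vertical maps. Commutativity of the right square then yields $(r_3^+)_!(\beta'')_! = (\beta_{\mbf{i,a}})_!(r^+_{\mbf j,3})_!$. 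Stringing these three identities together turns the right-hand side into $(\beta_{\mbf{i,a}})_!(r^+_{\mbf j,3})_!(r^+_{\mbf j,2})_\flat(r^+_{\mbf j,1})^*(R_1^+\boxtimes R_2^+)[d_1^+-d_2]$.

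It then remains to recognise the bracketed complex as $R_1^+\circ R_2^+$ on $\mathcal E^+_{\mbf j}$. The only bookkeeping required is that the two definitions of $\circ$, on the top and bottom rows of (C), use the same shift: both $d_1^+$ and $d_2=\dim(G_T\times G_W)$ depend only on the dimension vectors and on the arrow set $\Omega^+$, not on the imaginary flag data, so the fibre dimensions of $r_1^+$ and $r^+_{\mbf j,1}$ coincide, and likewise for $r_2^+$ and $r^+_{\mbf j,2}$. The main obstacle I anticipate is justifying base change for the less standard functor $(-)_\flat$ across the middle square; this compatibility is built into the construction because the same group acts on both rows and the two bundle structures are identified along $\beta',\beta''$, but it should be spelled out carefully to avoid an accidental equivariance gap. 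Everything else is a transcription of the three-step calculation in Lemma~\ref{LemmaA}.
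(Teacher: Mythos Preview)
Your proposal is correct and follows exactly the approach the paper intends: the paper's own proof consists of the single sentence ``The proof goes exactly the same as the proof of Lemma~\ref{LemmaA},'' and your outline is precisely that argument transcribed to Diagram~(C), with the $\beta$'s playing the role of the $\psi$'s. Your extra bookkeeping on the shifts (that $d_1^+$ agrees for the top and bottom rows because the left square is cartesian) is a point the paper leaves implicit, and your remark about base change for $(-)_\flat$ across the middle square is likewise the one step the paper's proof of Lemma~\ref{LemmaA} also uses without elaboration.
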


The proof goes exactly the same as the proof of Lemma
~\ref{LemmaA}.

\section{Based algebras}

\subsection{Lusztig's algebras}
\label{algebraK}
Let $\mathcal{K}_V=\mathcal K(\mathcal Q_V)$ be the Grothendieck group of the category
$\mathcal Q_V$, i.e.,
it is the abelian group with one generator $\langle L\rangle$ for each isomorphism class
of objects in $\mathcal{Q}_V$ with relations: 
$\langle L\rangle+\langle L'\rangle=\langle L''\rangle$ 
if $ L'' \cong L\oplus L'$.
  
Recall that  $\mathbb A=\mathbb Z[v,v^{-1}]$.
$\mathcal K_V$ has an $\mathbb A$-module structure defined by 
$v^n\langle L\rangle =\langle L[n]\rangle $ 
for any generator $\langle L\rangle\in\mathcal{Q}_V$ and 
$n\in \mathbb Z$.
Observe that $\mathcal K_V$ is a free $\mathbb A$-module with basis $\langle L\rangle$
where $\langle L\rangle$ runs over $\mathcal P_V$.
Moreover, $\mathcal{K}_V \cong \mathcal{K}_{V'}$,
for any $V$ and $V'$ such that $|V|=|V'|$. 

For each $\nu\in \mathbb N[I]$, we fix an $I$-graded vector space $V$ of dimension $\nu$.
Let 
\begin{align*}
&\mathcal{K}_{\nu}=\mathcal{K}_V,\quad 
\mathcal{K}=\oplus_{\nu\in \mathbb{N}[I]}\mathcal{K}_{\nu},\quad
\mathcal{K}_Q=\mathbb{Q}(v)\otimes_{\mathbb A}\mathcal{K},\\
&\mathcal P_{\nu}=\mathcal P_V
\quad \text{and} \quad
\mathcal P=\sqcup_{\nu\in \mathbb N[I]} \mathcal P_{\nu}.
\end{align*}
For any $\alpha, \beta\in \mathbb N[I]$, the functor $\circ$ defined
in Section ~\ref{inductionA} induces an 
$\mathbb A$-linear map
\[
\circ: \mathcal K_{\alpha} \otimes_{\mathbb A} \mathcal K_{\beta} 
\to \mathcal K_{\alpha +\beta}.
\]
By adding up these linear maps, we have an $\A$-linear map
\[
\circ: \mathcal K \otimes_{\mathbb A} \mathcal K \to \mathcal K.
\] 
Similarly, the operation $\circ$ induces a $\mathbb Q(v)$-linear map
$\circ:\mathcal K_Q \otimes_{\mathbb Q(v)} \mathcal K_Q \to \mathcal K_Q.$

\begin{prop}
\label{algebraprop}
\begin{enumerate}
\item $(\mathcal K, \circ)$ is an associative algebra 
      over $\mathbb A$, while  $(\mathcal K_Q, \circ)$ is a $\mathbb Q(v)$-algebra.
\item $\mathcal P$ is an $\mathbb A$-basis of $(\mathcal K, \circ)$ 
      and a $\mathbb Q(v)$-basis of $(\mathcal K_Q, \circ)$. 
\end{enumerate}
\end{prop}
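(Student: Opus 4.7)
The plan is to establish the basis statement first, since it is essentially formal, and then reduce the algebra structure to well-definedness, bilinearity, and associativity, with the last being the only substantive point.

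For part (2), I would observe that $\mathcal Q_V$ is, by construction, a Krull--Schmidt additive category whose indecomposable objects (up to isomorphism) are exactly the shifts $P[n]$ with $P\in\mathcal P_V$ and $n\in\mathbb Z$. Hence the Grothendieck group $\mathcal K_V$ is a free abelian group on these shifts, and the rule $v^n\langle L\rangle=\langle L[n]\rangle$ identifies $\mathcal K_V$ with the free $\mathbb A$-module on $\mathcal P_V$. Summing over $\nu\in\mathbb N[I]$ gives the $\mathbb A$-basis $\mathcal P$ of $\mathcal K$, and extending scalars to $\mathbb Q(v)$ gives the $\mathbb Q(v)$-basis of $\mathcal K_Q$. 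This part requires no further input beyond the definitions.

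For part (1), well-definedness of the $\mathbb A$-linear map $\circ$ is Lemma~\ref{ind1}(1): if $K_1\in\mathcal Q_T$ and $K_2\in\mathcal Q_W$, then $K_1\circ K_2\in\mathcal Q_V$, so passage to Grothendieck groups gives an $\mathbb A$-bilinear pairing. Bilinearity is automatic since each of the functors $q_1^*$, $(q_2)_\flat$, $(q_3)_!$ and the external tensor product $\boxtimes$ commutes with finite direct sums, and the shift $[d_1-d_2]$ intertwines the $\mathbb A$-action. Compatibility with the shift-by-$v$ on both factors gives $\mathbb A$-linearity in each variable.

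The main obstacle, and the only nontrivial step, is associativity of $\circ$. I would prove $(K_1\circ K_2)\circ K_3=K_1\circ(K_2\circ K_3)$ for $K_i$ ranging over generators $\mathcal P$; by bilinearity and the density of the classes $\langle L_{\mbf{i,a}}\rangle$ among generating families (and by Lemma~\ref{ind1}(3), which already gives $L_{\mbf{i',a'}}\circ L_{\mbf{i'',a''}}=L_{\mbf{i'i'',a'a''}}$), it suffices to verify
\[
(L_{\mbf{i}_1,\mbf{a}_1}\circ L_{\mbf{i}_2,\mbf{a}_2})\circ L_{\mbf{i}_3,\mbf{a}_3}
= L_{\mbf{i}_1,\mbf{a}_1}\circ(L_{\mbf{i}_2,\mbf{a}_2}\circ L_{\mbf{i}_3,\mbf{a}_3}),
\]
and both sides equal $L_{\mbf{i}_1\mbf{i}_2\mbf{i}_3,\mbf{a}_1\mbf{a}_2\mbf{a}_3}$ by iterated application of Lemma~\ref{ind1}(3). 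Concatenation of sequences is strictly associative, so the two iterated composites coincide. Alternatively, the same conclusion follows, in a manner independent of the generation statement, by writing out the threefold analogue of the diagram in Section~\ref{inductionA}, indexed by pairs of nested $I$-graded subspaces $V''\subseteq V'\subseteq V$, and applying base change for proper morphisms together with the principal-bundle descent property of $(\cdot)_\flat$; both iterated composites are computed by pull--push along this same threefold diagram (up to a shift whose exponent is the same on both sides, being the symmetric expression $\sum_{i}(|V_i'|^2+|V_i''/V_i'|^2+|V_i/V_i''|^2)$ corrected by the analogous sum over $\Omega$). The argument on $\mathcal K_Q$ is then obtained by base change from $\mathbb A$ to $\mathbb Q(v)$.
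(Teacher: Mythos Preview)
Your treatment of part~(2) and of well-definedness and bilinearity in part~(1) is fine and matches what the paper has in mind.

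For associativity, however, your approach (a) via Lemma~\ref{ind1}(3) has a genuine gap. Verifying the identity on the $L_{\mbf{i,a}}$'s suffices only if their classes span $\mathcal K$ over $\mathbb A$ (or $\mathcal K_Q$ over $\mathbb Q(v)$). But by Lemma~\ref{ind1}(3) the $\mathbb A$-span of the $\langle L_{\mbf{i,a}}\rangle$ is exactly the subalgebra generated by the $L_{i,n}$'s, and whether that subalgebra equals all of $\mathcal K$ is precisely Lusztig's question from~\cite{lusztig3}; it is one of the main themes of this paper (see Theorem~\ref{theorem1} and the comments in Section~8). Even the weaker statement that these classes span $\mathcal M_Q$ is Theorem~\ref{theorem1}, whose proof already uses the algebra structure on $\mathcal K$. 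So approach (a) is circular in the present setting and must be discarded, not merely offered as a first option.

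Your alternative (b) is the correct route and is exactly what the paper intends: the paper gives no argument of its own but cites~\cite{lusztig2} and~\cite{lusztig4}, where associativity is established at the level of complexes by the three-step analogue of the diagram in Section~\ref{inductionA} (indexed by pairs of nested $I$-graded subspaces), using proper base change and the compatibility of $(\cdot)_\flat$ with smooth pullback. That argument yields a natural isomorphism $(K_1\circ K_2)\circ K_3\cong K_1\circ(K_2\circ K_3)$ in $\mathcal D(E_V)$ for arbitrary $K_i$ and requires no spanning hypothesis. Keep (b) as the proof and drop (a).
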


See ~\cite{lusztig2} or ~\cite{lusztig4} for a proof of
associativity. 

Following Lusztig, we call $\mathcal P$ the $canonical$ $basis$ of the algebras
$(\mathcal K,\circ)$ and $(\mathcal K_Q,\circ)$.
From now on, we simply write $\mathcal K$ (resp. $\mathcal K_Q$) for
the algebra $(\mathcal K, \circ)$ (resp. $(\mathcal K_Q, \circ)$).

\subsection{The algebras $\mathcal M$ and $\mathcal M_Q$}
\label{algebraM}
Similar to the algebras $\mathcal K$ and $\mathcal K_Q$, 
we define algebras $\mathcal M$ and $\mathcal M_Q$ as follows.

Let $\mathcal M_V$ be the $\A$-submodule of $\mathcal K_V$ generated
by elements in $\mathcal R_V$ (see ~\ref{semisimple}).
For each $\nu\in \mathbb N[I]$, we fix an $I$-graded vector space $V$ of dimension $\nu$.
Let 
\begin{align*}
&\mathcal M_{\nu}=\mathcal M_V,\quad 
\mathcal M=\oplus_{\nu\in \mathbb{N}[I]}\mathcal M_{\nu},
\quad 
\mathcal M_Q=\mathbb Q(v)\otimes_{\A}\mathcal M,\\
&\mathcal R_{\nu}=\mathcal R_V
\quad \text{and} \quad
\mathcal R=\sqcup_{\nu\in \mathbb N[I]} \mathcal R_{\nu}.
\end{align*}
By Lemma ~\ref{ind1} (2),  the functor $\circ$ defined
in Section ~\ref{inductionA} induces an $\mathbb A$-linear map
\[
\circ: \mathcal M_{\alpha} \otimes_{\mathbb A} \mathcal M_{\beta} 
\to \mathcal M_{\alpha +\beta},
\quad 
\mbox{for}
\;
\alpha, \beta \in \mathbb N[I].
\]
By adding up these linear maps, we have linear maps
\[
\circ: \mathcal M \otimes_{\mathbb A} \mathcal M \to \mathcal M
\quad
\mbox{and}
\quad
\circ: \mathcal M_Q \otimes_{\mathbb Q(v)} \mathcal M_Q \to \mathcal M_Q,
\] 
where the first map is $\A$-linear while the second map is $\mathbb
Q(v)$-linear.
 
\begin{prop}
\label{M}
\begin{enumerate}
\item The pair $(\mathcal M, \circ)$ is an $\A$-subalgebra of $\mathcal
      K$, and $(\mathcal M_Q,\circ)$ is a $\mathbb Q(v)$-subalgebra of
      $\mathcal K_Q$.
\item $\mathcal R$ is an $\A$-basis of $(\mathcal M,\circ)$, and a
      $\mathbb Q(v)$-basis of $(\mathcal M_Q,\circ)$.
\item $(\mathcal M, \circ)=(\mathcal K,\circ)$ and $\mathcal
      R=\mathcal P$ if $l_i\geq 2$ for all
      $i\in I^-$ (see ~\ref{quivers}).
\end{enumerate}
\end{prop}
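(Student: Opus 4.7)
The plan is to handle the three assertions in order, with (2) requiring the most care. For (1), closure of $\mathcal M$ under $\circ$ reduces by $\A$-bilinearity to verifying $[R_1]\circ[R_2]\in\mathcal M_V$ when $R_1\in\mathcal R_T$, $R_2\in\mathcal R_W$ and $V=T\oplus W$. I would write $R_i=(\psi_{\mbf{i_i,a_i}})_!(R_i')$ for some $R_i'\in\widehat{\mathcal E}_{\mbf{i_i,a_i}}$ and apply identity~(\ref{equationa}) to obtain $R_1\circ R_2=(\psi_{\mbf{i,a}})_!(R_1'\circ R_2')$ for the concatenation $(\mbf{i,a})$. Lemma~\ref{ind1}(2) then forces this to lie in $\mathcal S_V$, hence to decompose into shifts of elements of $\mathcal R_V$, so its class is in $\mathcal M_V$. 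Associativity is inherited from $(\mathcal K,\circ)$, and the same argument works verbatim over $\mathbb Q(v)$ for $\mathcal M_Q$.

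For (2), spanning is immediate from the definition of $\mathcal M$, so the real task is $\A$-linear independence of $\mathcal R$. Each $R\in\mathcal R_V$ is an honest (unshifted) semisimple perverse sheaf, so its class in $\mathcal K_V=\bigoplus_{P\in\mathcal P_V}\A[P]$ expands as $\sum_P n_P^R[P]$ with $n_P^R\in\mathbb Z_{\geq 0}$ recording simple multiplicities. To promote these integer coefficients into $\A$-independence, I would associate to each $R$ a distinguished ``top'' summand $P_R\in\mathcal P_V$ of multiplicity exactly one, arranged so that $R\mapsto P_R$ is injective on $\mathcal R_V$. The natural candidate for $P_R$ comes from the BBD decomposition of $(\psi_{\mbf{i,a}})_!(R^-)$ for $R^-\in\widehat{\mathcal E}_{\mbf{i,a}}$ simple: the IC extension of the image of the open dense support of $R^-$. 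The resulting unitriangular relation between $\mathcal R$ and a subset of $\mathcal P$ yields $\A$-linear independence in $\mathcal K_V$, and passing to $\mathbb Q(v)\otimes_{\A}(-)$ gives the statement for $\mathcal M_Q$.

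For (3), assume $l_i\geq 2$ for all $i\in I^-$. Lemma~\ref{perverse}(2) immediately gives $\mathcal R_V\subseteq\mathcal P_V$. For the reverse inclusion, any $P\in\mathcal P_V$ appears (up to shift) as a simple summand of some $L_{\mbf{i,a}}$. The identity $L_{\mbf{i,a}}=(\psi_{\mbf{i,a}})_!(L_{\mbf{i,a}}^-)$, combined with the BBD decomposition $L_{\mbf{i,a}}^-=\bigoplus_j R_j[n_j]$ with $R_j\in\widehat{\mathcal E}_{\mbf{i,a}}$, yields $L_{\mbf{i,a}}=\bigoplus_j(\psi_{\mbf{i,a}})_!(R_j)[n_j]$ with every summand simple by Lemma~\ref{perverse}(2). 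Hence $P$ agrees with one of the $(\psi_{\mbf{i,a}})_!(R_j)$, so $P\in\mathcal R_V$. The equality $\mathcal R=\mathcal P$ of $\A$-bases then forces $\mathcal M=\mathcal K$.

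The main obstacle is the linear independence in (2). The triangularity argument relies on pinpointing a genuinely distinguished top simple summand of $(\psi_{\mbf{i,a}})_!(R^-)$ and verifying injectivity of $R\mapsto P_R$. This will require careful control over the stratification of $E_V$ by images of $G_V$-orbits under $\psi_{\mbf{i,a}}$ together with the decomposition-theorem contributions for each simple $R^-\in\widehat{\mathcal E}_{\mbf{i,a}}$.
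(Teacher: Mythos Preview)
Your proposal is correct and matches the paper's approach. The paper's own proof is terse: part (1) is ``by definition'' (i.e., exactly your appeal to Lemma~\ref{ind1}(2) for closure, with associativity inherited from $\mathcal K$); part (2) is deferred verbatim to Proposition~4.3 of \cite{KS}, whose argument is precisely the semismallness-based unitriangularity you sketch (each simple $R^-\in\widehat{\mathcal E}_{\mbf{i,a}}$ contributes, via the decomposition theorem for the semismall map $\psi_{\mbf{i,a}}$, a unique ``open-stratum'' IC summand of multiplicity one, and this assignment is injective); and part (3) is attributed to Lemma~\ref{perverse}(2), which is the only nontrivial input in your argument for (3) as well. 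Your identification of the injectivity of $R\mapsto P_R$ as the crux of (2) is accurate, and the stratification control you anticipate needing is exactly what the semismallness of $\psi_{\mbf{i,a}}$ supplies in \cite{KS}.
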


Proposition ~\ref{M} (1) is by definition. The proof of Proposition ~\ref{M} (2) follows
essentially word by word from the proof of Proposition 4.3 in
~\cite{KS}.  We leave it to the reader. Proposition ~\ref{M} (3) is
due to Lemma ~\ref{perverse} (2).

We call $\mathcal R$ the $canonical$
$basis$ of $\mathcal M$ and $\mathcal M_Q$.

\subsection{}
\label{newalgebra}
By replacing  the pair $(\mathcal R_V, \mathcal S_V)$ with the pair 
$(\mathscr R_V, \mathscr S_V)$ (\ref{certain}) and following the
construction in Section ~\ref{algebraM}, we obtain similar based algebras,
denoted by $\mathscr M=(\mathscr M, \circ)$ and 
$\mathscr M_Q=(\mathscr M_Q,\circ)$, respectively, with basis
$\mathscr R=\sqcup_{\nu\in \mathbb N[I]} \mathscr R_{\nu}$.

By replacing  the pair $(\mathcal P_V, \mathcal Q_V)$ with the pair 
$(\widehat{\mathscr E}_{\nu}, \widehat{\mathscr F}_{\nu})$
(\ref{certain}),
the diagram in Section ~\ref{inductionA} with the first row
in Diagram (\ref{A}) in Section ~\ref{inductionB}, and 
following the
construction in Section ~\ref{algebraK}, we obtain similar based algebras,
denoted by $\mathscr K=(\mathscr K, \circ)$ and 
$\mathscr K_Q=(\mathscr K_Q,\circ)$, respectively, with basis
$\widehat{\mathscr E}=\sqcup_{\nu\in \mathbb N[I]} \widehat{\mathcal E}_{\nu}$.

Finally, the data $(\widehat{\mathscr E}_{\nu}, \widehat{\mathscr F}_{\nu})$
(\ref{certain}) and the second row in Diagram (\ref{B}) give rise to a
based algebra, denoted by 
$\mathscr K^+=(\mathscr K^+, \circ)$ and 
$\mathscr K^+_Q=(\mathscr K^+_Q,\circ)$, respectively, with basis
$\widehat{\mathscr E}^+=\sqcup_{\nu\in \mathbb N[I]} \widehat{\mathcal E}^+_{\nu}$.

The algebras $\mathscr M$, $\mathscr K$ and $\mathscr K^+$ are related
as follows. The bijective map
\[
\psi: \widehat{\mathscr E}_{\nu} \to \mathscr R_V
\quad
\widehat{R} \mapsto (\psi_{\nu})_!(\widehat{R})
\quad \forall
\widehat{R}\in \widehat{\mathscr E}_{\nu}
\]
extends to an isomorphism of $\A$-modules
$\psi: \mathscr K \tilde{\longrightarrow} \mathscr M$.
The bijective map
\[
\delta_{\nu}^*[d_{\nu}]: \widehat{\mathscr E}_{\nu}^+ \to
\widehat{\mathscr E}_{\nu}
\quad
\widehat R^+ \mapsto \delta_{\nu}^* [d_{\nu}](\widehat R^+)
\quad \forall \widehat R^+\in \widehat{\mathscr E}_{\nu}^+
\]
($d_{\nu}$ is the fibre dimension of $\delta_{\nu}$) defines an isomorphism of $\A$-modules
$\delta: \mathscr K^+ \tilde{\longrightarrow} \mathscr K$.

Now equation (1) in Section \ref{inductionB} implies that $\psi$ is an $\A$-algebra
homomorphism, while Lemma ~\ref{LemmaB} implies that $\delta$ is an
$\A$-algebra homomorphism.  In summary,

\begin{prop} 
\label{variant}
The maps 
$
\mathscr K^+ \overset{\delta}{\longrightarrow}
\mathscr K \overset{\psi}{\longrightarrow} \mathscr M 
$
are isomorphisms of $\A$-algebras. Moreover,
$\delta(\widehat{\mathscr E}^+)= 
\widehat{\mathscr E}$ and
$\psi(\widehat{\mathscr E})= \mathscr R$.
\end{prop}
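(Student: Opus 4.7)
The plan is to combine three ingredients that have essentially been assembled by the preceding sections: bijectivity of the underlying module maps on basis elements, the algebra homomorphism property for $\psi$, and the algebra homomorphism property for $\delta$. Each is largely a routine consequence of the geometric setup, and I would carry them out in that order.

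First, I would verify bijectivity and the basis statements. For $\delta$, since $\delta_\nu : \mathcal E_\nu \to \mathcal E_\nu^+$ is a vector bundle of rank $d_\nu$ (a special case of Section~\ref{relation}(1) with $\mbf i \in \mathcal Y_\nu$), the functor $\delta_\nu^*[d_\nu]$ is an equivalence on perverse sheaves sending simples to simples. Proper base change applied to the left cartesian square of the diagram in Section~\ref{certain} yields
\[
\delta_\nu^* (\phi_{\mbf i}^+)_!(\bar{\mathbb Q}_l) \;\cong\; (\phi_{\mbf i})_! \gamma_{\mbf i}^*(\bar{\mathbb Q}_l),
\]
and, after absorbing shifts (using that $\gamma_{\mbf i}$ is itself a vector bundle), identifies $\delta_\nu^*[d_\nu]$ with a bijection from the simple summands of $(\phi_{\mbf i}^+)_!(\bar{\mathbb Q}_l[\dim \widetilde{\mathcal F}_{\mbf i}^+])$ to those of $L_{\mbf i}^-$. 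Hence $\delta$ restricts to a bijection $\widehat{\mathscr E}^+ \to \widehat{\mathscr E}$. For $\psi$, the set $\mathscr R_V$ is defined in Section~\ref{certain} to be the image of $\widehat{\mathscr E}_\nu$ under $\widehat R \mapsto (\psi_\nu)_!(\widehat R)$, so surjectivity onto $\mathscr R$ is automatic, and injectivity on isomorphism classes follows from Lemma~\ref{perverse}(1) together with the fact that distinct $\widehat R$ appear as pairwise non-isomorphic summands of the semisimple complex $L_{\mbf i} = (\psi_\nu)_!(L_{\mbf i}^-)$.

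Second, I would verify the algebra homomorphism property of $\psi$. Given $R_1 \in \widehat{\mathscr E}_{\mbf i'}$ and $R_2 \in \widehat{\mathscr E}_{\mbf i''}$, the product $R_1 \circ R_2$ in $\mathscr K$ is computed via the first row of Diagram~(A) as in Section~\ref{newalgebra}, while $\psi(R_1) \circ \psi(R_2)$ in $\mathscr M$ is computed via the bottom row of Section~\ref{inductionA}. Equation~\ref{equationa}, extracted from Lemma~\ref{LemmaA}, is precisely
\[
(\psi_{\mbf i'})_!(R_1) \circ (\psi_{\mbf i''})_!(R_2) \;=\; (\psi_{\mbf i})_!(R_1 \circ R_2),
\]
which reads $\psi(R_1) \circ \psi(R_2) = \psi(R_1 \circ R_2)$. $\A$-linearity then extends this identity to all of $\mathscr K$. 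The algebra homomorphism property of $\delta$ is entirely parallel, using Lemma~\ref{LemmaB} in place of Lemma~\ref{LemmaA}: the products in $\mathscr K$ and $\mathscr K^+$ come respectively from the top and bottom rows of Diagram~(B), and Lemma~\ref{LemmaB} provides
\[
\delta_{\mbf i'}^*[d_{\mbf i'}](S_1^+) \circ \delta_{\mbf i''}^*[d_{\mbf i''}](S_2^+) \;=\; \delta_{\mbf i}^*[d_{\mbf i}](S_1^+ \circ S_2^+),
\]
i.e.\ $\delta(S_1^+) \circ \delta(S_2^+) = \delta(S_1^+ \circ S_2^+)$ once the shift by $d_\nu$ is folded into the definition of $\delta$. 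Composing the two homomorphisms concludes the proof.

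The real technical difficulty is not in the proposition itself but in the preceding compatibility lemmas: verifying that the relevant squares in Diagrams~(A) and~(B) are cartesian so that proper base change applies, and performing the shift bookkeeping that culminates in the identity $d_{\mbf{i,a}} = d_1 - d_1^+ + d_{\mbf{i',a'}} + d_{\mbf{i'',a''}}$ used in Lemma~\ref{LemmaB}. Once Lemmas~\ref{LemmaA} and~\ref{LemmaB} are in hand, the proof of Proposition~\ref{variant} is the direct assembly above.
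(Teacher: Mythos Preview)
Your proposal is correct and follows essentially the same approach as the paper: the paper's argument is the short paragraph immediately preceding the proposition, which asserts that $\psi$ and $\delta$ are bijections on the basis sets (hence $\A$-module isomorphisms) and then invokes equation~(\ref{equationa}) and Lemma~\ref{LemmaB} for the multiplicativity of $\psi$ and $\delta$, respectively. You supply more detail on bijectivity than the paper does; the one place to tighten is your injectivity argument for $\psi$, which as written is slightly circular (you appeal to distinct $\widehat R$ appearing as non-isomorphic summands of $L_{\mbf i}$, but that is on $E_V$ and is precisely what is at issue)---the paper sidesteps this by deferring to \cite{KS}, and the cleanest fix is to cite the basis property of $\mathscr R$ (the analogue of Proposition~\ref{M}(2)).
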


\begin{rem}
The pair  $(\mathscr M, \mathscr R)$ first appeared in
~\cite{KS}. 
\end{rem}

Let $C=C(Q)$ be the Borcherds-Cartan matrix associated to the quiver $Q$
(\ref{quivers}). Let $\U^-$ be the negative part of the quantum
generalized Kac-Moody algebra associated to $C$
(\ref{quantum}). Kang and Schiffmann showed that 

\begin{thm}[\cite{KS}]
\label{KSthm}
\begin{enumerate}
\item The assignment $L_{i, 1} \mapsto F_i$ defines an $\A$-algebra isomorphism
$\mathscr M \overset{\phi}{\to}  \, _{\mathbb A}\U^-$, and a $\mathbb Q(v)$-algebra isomorphism $\mathscr M_Q \to \U^-$. 
Here $L_{i,1}$ is the unique element in $\mathscr R_i$ for any $i\in I$.

\item $\mbf B=\phi(\mathscr R)$ is the canonical basis of
      $_{\mathbb A}\U^-$ and $\U^-$. Moreover, $\mbf B$ coincides with the global
      crystal base of $\U^-$ in \cite{JKK} if $l_i\geq 2$
      $(i\in I^-)$.

\item For any $R\in \mathscr R_{\nu}$ with  $n=n_i(R)>0$ ($i$ is a
sink), 
$R=R_1\circ R_2 +R_3$
where $R_1\in \mathscr R_{ni}$, $R_2\in \mathscr R_{\nu-ni}$ and
$R_3\in \mathscr M_{\nu}$ with $n_i(R_3)>n$. (see ~\ref{proof1} for
notations.)
\end{enumerate}
\end{thm}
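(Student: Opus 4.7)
The plan is to reproduce (and adapt) the strategy of Kang--Schiffmann, treating the three parts in order.

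For part (1), the first step is to show that $\mathscr{M}$ is generated as an $\A$-algebra by the classes $L_{i,1}$ for $i \in I$. This follows from the construction: every element of $\mathscr{R}_\nu$ arises from the pushforward along $\psi_{\mbf{i,a}}$ of a direct summand of $L^-_{\mbf{i,a}}$, and by Lemma~\ref{LemmaA} combined with the associativity of $\circ$, any such $L_{\mbf{i,a}}$ lies in the subalgebra generated by the $L_{i,1}$. The second step is to verify that the $L_{i,1}$ satisfy the defining relations of $\AU^-$. The commutation $L_{i,1} \circ L_{j,1} = L_{j,1} \circ L_{i,1}$ when $c_{ij} = 0$ is a direct computation using the induction diagram in Section~\ref{inductionA}, since in that case $E'$ decouples. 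For $i \in I^+$ and $i \neq j$, the quantum Serre relation follows from Lusztig's argument in \cite{lusztig2}, which is geometric and local to the subquiver on $\{i,j\}$; the factorization $\pi_{\mbf{i,a}} = \psi_{\mbf{i,a}} \phi_{\mbf{i,a}}$ of Section~\ref{factorization} is what permits Lusztig's argument to survive in the presence of loops at other vertices, because the loop contribution is absorbed into $\phi_{\mbf{i,a}}$ and leaves the real-vertex computation unchanged. No additional relation is imposed at imaginary vertices because $L_{i,1}^{\circ n}$ for $i \in I^-$ remains nonzero in every degree (there is no analogue of a divided power forcing a Serre-type cancellation). Thus one obtains an $\A$-algebra surjection $\phi: \AU^- \to \mathscr{M}$. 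Injectivity follows from a Hilbert series comparison: the cardinalities of $\mathscr{R}_\nu$ match the known PBW-type dimensions of $\AU^-_\nu$ via a triangularity argument with respect to a fixed total order on $\mathcal{X}_\nu$.

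For part (2), one checks that $\phi(\mathscr{R})$ satisfies the two axiomatic properties characterizing the canonical basis of $\AU^-$. Bar-invariance is immediate because each element of $\mathscr{R}_V$ is (up to a normalization shift) a self-dual semisimple perverse sheaf, and Verdier duality on $\mathcal{D}(E_V)$ intertwines with the bar-involution on $\AU^-$ through the map $\phi$. Triangularity of $\mathscr{R}$ with respect to a monomial basis in the $L_{i,1}$, with diagonal entries equal to $1$ and off-diagonal entries in $v\mathbb{Z}[v]$, comes from the hard Lefschetz decomposition of $L_{\mbf{i,a}}$ and standard support-dimension estimates on the perverse summands. These two properties uniquely characterize $\mbf{B}$ in $\AU^-$, proving $\phi(\mathscr{R}) = \mbf{B}$. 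The identification with the global crystal base of \cite{JKK} under $l_i \geq 2$ then reduces, via Proposition~\ref{M}(3), to Grojnowski--Lusztig's comparison \cite{GL}.

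For part (3), I would work with the upper semicontinuous stratification of $E_V$ by the function $x \mapsto \dim \mrm{coker}\bigl( \oplus_{t(\omega) = i, \omega \in \Omega^+} V_{s(\omega)} \to V_i\bigr)$, which is well-defined because $i$ is a sink. Denote this function $n_i$. For $R \in \mathscr{R}_\nu$, let $n = n_i(R)$ be its generic value on the support of $R$. The open stratum $\{n_i = n\}$ admits a morphism to $E_{W,\nu - ni}$ (forgetting the cokernel) whose fibres are affine spaces, and the restriction of $R$ to this stratum pulls back, up to shift, from a basis element $R_2 \in \mathscr{R}_{\nu - ni}$. Identifying $R_1 = L_{i,n} = L_{i,1}^{\circ n}/[n]^!$ as the unique element of $\mathscr{R}_{ni}$ and analyzing the induction diagram (A) in Section~\ref{inductionB} shows that $R_1 \circ R_2$ is supported in $\{n_i \geq n\}$ and agrees with $R$ on the open stratum $\{n_i = n\}$. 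The difference $R_3 = R_1 \circ R_2 - R$ is therefore supported where $n_i > n$, hence $n_i(R_3) > n$ as claimed.

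The main technical obstacle is part (1): the geometric verification of the quantum Serre relations in the Borcherds-Cartan setting, where one must carefully disentangle the contribution of loops (handled by $\phi_{\mbf{i,a}}$) from the contribution of the real-vertex interactions (handled by $\psi_{\mbf{i,a}}$), and show that the latter behaves exactly as in Lusztig's loop-free case. The cartesian square in Section~\ref{relation} is what makes this disentanglement possible.
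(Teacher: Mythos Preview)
The paper does not give its own proof of this theorem; it is quoted from Kang--Schiffmann \cite{KS}. So there is no in-paper argument to compare against, and I assess your sketch on its own merits.

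There is a genuine gap in part (1), in the claim that $\mathscr{M}$ is generated by the $L_{i,1}$. You observe that each $R \in \mathscr{R}_\nu$ occurs as a summand of some $L_{\mbf i}$, and that each $L_{\mbf i}$ is a $\circ$-product of $L_{i,1}$'s; but this only places $L_{\mbf i}$ in the subalgebra generated by the $L_{i,1}$, not its individual perverse summands. Knowing that $R$ is a direct summand of an element of a subalgebra does not force $R$ itself into that subalgebra. What actually establishes generation is precisely the inductive decomposition in part (3): once $R = R_1 \circ R_2 + R_3$ with $R_1 \in \mathscr{R}_{ni}$, $R_2 \in \mathscr{R}_{\nu - ni}$, and $n_i(R_3) > n$, a double induction on $\nu$ and on $n_i$ exhibits every $R$ in the subalgebra generated by the singleton sets $\mathscr{R}_{mi}$. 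Thus part (3) must logically precede the surjectivity step in (1), not follow it; your Hilbert-series comparison for injectivity is fine, but it cannot substitute for this.

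Two further points. In part (3), your function $n_i$ ignores the loops at $i$; the correct definition (Section~\ref{proof1}) is $n_i(x) = \mrm{codim}_{V_i} V_i(x)$ with $V_i(x) = V_{i,x} + \sum_{p \in Path(i)} x_p(V_{i,x})$, the loop-saturation being exactly what is needed so that the top $n$-dimensional piece at $i$ is a genuine subrepresentation and the factor $R_1$ lands in $\mathscr{R}_{ni}$. Relatedly, for imaginary $i$ the unique element of $\mathscr{R}_{ni}$ is $L_{i,1}^{\circ n}$, not $L_{i,n}$, and no divided power appears. Finally, in part (2) the appeal to \cite{GL} is misplaced: Grojnowski--Lusztig concerns only ordinary Cartan data, and the comparison with the global crystal base of \cite{JKK} in the Borcherds--Cartan setting is a separate argument carried out directly in \cite{KS}, not a corollary of \cite{GL} via Proposition~\ref{M}(3).
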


In view of Proposition ~\ref{variant} and Theorem ~\ref{KSthm}, we have

\begin{cor}
\label{variantcorollary}
\begin{enumerate}
\item The algebras $\mathscr K^+_Q$ and  $\mathscr K_Q$ are geometric
      realizations of $\U^-$ via the maps $\delta$, $\psi$ and $\phi$.
\item $\widehat{\mathscr E}^+$ and $\widehat{\mathscr E}$ are
      realizations of the canonical basis $\mbf B$ of $\U^-$. 
\item For any $S^+ \in \widehat{\mathscr E}^+_{\nu}$ such that 
      $n=n_i(S^+)>0$ ($i$ a sink), $S^+=S^+_1\circ S^+_2 +S_3^+$ for
      some $S^+_1\in \widehat{\mathscr E}^+_{mi}$ ($n\geq m>0$),
      $S^+_2\in \widehat{\mathscr E}^+_{\nu-mi}$, and $S_3^+\in
      \mathscr K^+_{\nu}$ with $n_i(S_3^+)>n$.

\end{enumerate}
\end{cor}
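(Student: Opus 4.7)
The plan is to bootstrap from Proposition \ref{variant} and Theorem \ref{KSthm}. Proposition \ref{variant} identifies $\mathscr K^+$, $\mathscr K$ and $\mathscr M$ as $\A$-algebras with matching distinguished bases via $\mathscr K^+ \xrightarrow{\delta} \mathscr K \xrightarrow{\psi} \mathscr M$, while Theorem \ref{KSthm} identifies $\mathscr M$ with $\AU^-$ and $\mathscr R$ with the canonical basis $\mbf B$ via the map $\phi$.

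For part (1), I would simply tensor with $\mathbb Q(v)$ and compose to obtain the chain of $\mathbb Q(v)$-algebra isomorphisms $\mathscr K^+_Q \xrightarrow{\delta} \mathscr K_Q \xrightarrow{\psi} \mathscr M_Q \xrightarrow{\phi} \U^-$. Part (2) then follows by tracking the three bases across these isomorphisms: $\delta(\widehat{\mathscr E}^+) = \widehat{\mathscr E}$ and $\psi(\widehat{\mathscr E}) = \mathscr R$ by the last clause of Proposition \ref{variant}, and $\phi(\mathscr R) = \mbf B$ by Theorem \ref{KSthm}(2).

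For part (3), the idea is to transport the decomposition supplied by Theorem \ref{KSthm}(3) through the isomorphisms. Given $S^+ \in \widehat{\mathscr E}^+_\nu$ with $n=n_i(S^+)>0$, set $\widehat R = \delta(S^+) \in \widehat{\mathscr E}_\nu$ and $R = \psi(\widehat R) \in \mathscr R_\nu$. Applying Theorem \ref{KSthm}(3) to $R$ yields $R = R_1 \circ R_2 + R_3$ with $R_1 \in \mathscr R_{n'i}$, $R_2 \in \mathscr R_{\nu-n'i}$, $R_3 \in \mathscr M_\nu$ and $n_i(R_3) > n'$, where $n' = n_i(R)$. Pulling this identity back under the $\A$-algebra isomorphism $(\psi\delta)^{-1}$ produces $S^+ = S^+_1 \circ S^+_2 + S^+_3$ in $\mathscr K^+$ with $S^+_1 = (\psi\delta)^{-1}(R_1) \in \widehat{\mathscr E}^+_{n'i}$, $S^+_2 \in \widehat{\mathscr E}^+_{\nu - n'i}$ and $S^+_3 \in \mathscr K^+_\nu$. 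Taking $m=n'$ gives the asserted decomposition, provided one knows $n \geq n' > 0$ and $n_i(S^+_3) > n$.

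The main obstacle therefore is the compatibility of the invariant $n_i$ (the sink-multiplicity statistic, to be introduced in Section \ref{proof1}) under the algebra isomorphisms $\psi$ and $\delta$. This amounts to checking that the open stratum of $E_V$ where $x$ has prescribed sink behavior pulls back, via the $G_V$-equivariant projections $\psi_\nu : \mathcal E_\nu \to E_V$ and $\delta_\nu : \mathcal E_\nu \to \mathcal E_\nu^+$, to the analogous strata on $\mathcal E_\nu$ and $\mathcal E_\nu^+$, and that the simple perverse sheaves in $\widehat{\mathscr E}_\nu$ and $\widehat{\mathscr E}^+_\nu$ inherit their sink-multiplicity from the image perverse sheaves in $\mathscr R_V$. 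Since $i$ is a real sink vertex, no loop at $i$ is involved and the imaginary flag data plays no role at that vertex; the compatibility should therefore reduce to the straightforward observation that $\psi_\nu$ and $\delta_\nu$ are isomorphisms over the sink-$i$ stratum, after which the transport of the decomposition is purely formal.
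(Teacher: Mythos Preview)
Your approach for parts (1) and (2) matches the paper's exactly: the corollary is presented there as an immediate consequence of Proposition~\ref{variant} and Theorem~\ref{KSthm}, with no further argument. For part (3), your strategy of transporting the decomposition in Theorem~\ref{KSthm}(3) through the algebra isomorphism $\psi\delta$ is also the intended one, and you correctly isolate the compatibility of the invariant $n_i$ across the three models as the only nontrivial point.

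The gap is in your resolution of that point. You assert that $i$ is a \emph{real} sink, but the paper's notion of sink (Section~\ref{proof1}) constrains only arrows in $\Omega^+$; an imaginary vertex with loops can be a sink, and in the proof of Theorem~\ref{theorem1} the corollary is invoked with $i=i_1$ arbitrary. When $i\in I^-$ the statistic $n_i(x)$ genuinely depends on the loop components through the term $\sum_{p\in\mathrm{Path}(i)} x_p(V_{i,x})$, so the passage between $\mathcal E^+_\nu$ (where loop data is absent) and $E_V$ does not automatically preserve $n_i$. Your claim that $\psi_\nu$ and $\delta_\nu$ are ``isomorphisms over the sink-$i$ stratum'' is not correct as stated: $\delta_\nu$ is a vector bundle whose fibres are exactly the spaces of flag-compatible loop data, nontrivial whenever $I^-\neq\emptyset$. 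Concretely, the inequality $n_i(R)\le n_i(S^+)$ is easy (extend by zero loops), so one does get $0<m=n_i(R)\le n$; but from $n_i(R_3)>m$ one only obtains $n_i(S_3^+)\ge n_i(R_3)>m$, not the required $n_i(S_3^+)>n$. The paper supplies no further detail at this step either; to close the argument one must either verify directly that supports transport correctly in both directions for imaginary $i$, or else read $n_i$ on $\mathcal E^+_\nu$ as defined via the image under $\psi\delta$, in which case the transport becomes tautological and the weakening from $m=n$ in Theorem~\ref{KSthm}(3) to $m\le n$ here is cosmetic.
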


Assume that $C=(c_{ij})_{i,j\in I}$ and $D=(d_{ij})_{i, j\in I}$ are
two Borcherds-Cartan matrices.
We call $C \approx D$ if
\[
c_{ij}=d_{ij} \quad \forall i\neq j \in I
\]
and 
\[
c_{ii}=2 \quad \mbox{if and only if} \quad d_{ii}=2.
\]
In other words, $C\approx D$ if $C$ and $D$ coincide at all entries except the
imaginary diagonal entries.
Although $\U(C)$ is not isomorphic to $\U(D)$, we have (see ~\ref{quantum})
\[
\U^-(C)= \U^-(D)
\quad
\mbox{if}
\quad 
C \approx D.
\]

Let $\mbf B(C)$ be the canonical basis of $\U^-(C)$ and $\mbf B(D)$ be
the canonical basis of $\U^-(D)$. Although $\U^-(C)=\U^-(D)$, the
constructions of $\mbf B(C)$ and $\mbf B(D)$ are different. But in
view of Corollary ~\ref{variantcorollary} (2), we have

\begin{thm}
\label{C=D}
$\mbf B(C)=\mbf B(D)$, provided that $\U^-(C)=\U^-(D)$.
\end{thm}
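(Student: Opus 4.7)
The plan is to exploit the fact that the algebra $\mathscr{K}^+$ and its basis $\widehat{\mathscr{E}}^+$ are built entirely from the loop-free piece of the quiver, so they are insensitive to the number of loops at imaginary vertices and therefore agree for any two quivers $Q_C$ and $Q_D$ whose Borcherds-Cartan matrices satisfy $C \approx D$.

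First I would choose quivers $Q_C$ and $Q_D$ realizing $C$ and $D$ respectively. Since $C \approx D$ means $c_{ij}=d_{ij}$ for $i\neq j$ and $c_{ii}=2 \iff d_{ii}=2$, one may take $Q_C$ and $Q_D$ to have the same vertex set $I$, the same partition $I = I^+ \sqcup I^-$, and the same set of non-loop arrows $\Omega^+$; the two quivers differ only in the numbers $l_i$ of loops at vertices $i \in I^-$. I would then inspect the objects entering the definition of $\mathscr{K}^+$ from Sections \ref{certain}--\ref{inductionB}: the variety $\mathcal{E}_\nu^+ = E_{V,\Omega^+}\times \Pi_{i\in I^-}\mathcal{F}_{(i,\ldots,i)}$, the variety $\widetilde{\mathcal{F}}_{\mathbf{i}}^+ \subseteq E_{V,\Omega^+}\times \mathcal{F}_{\mathbf{i}}$, the map $\phi_{\mathbf{i}}^+$, and the induction diagram (the bottom row of Diagram (B)). Each of these is defined using only $\Omega^+$, $I^-$, and flag varieties of the graded pieces $V_i$, and therefore is literally the same variety/morphism for $Q_C$ and $Q_D$.

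It follows at once that the set $\widehat{\mathscr{E}}_\nu^+$ of simple perverse sheaves on $\mathcal{E}_\nu^+$ appearing in $(\phi_{\mathbf{i}}^+)_!(\bar{\mathbb{Q}}_l)$ (with shifts, for $\mathbf{i}\in \mathcal{Y}_\nu$) coincides for $Q_C$ and $Q_D$, and the $\mathbb{A}$-algebra $\mathscr{K}^+$ determined by the bottom row of Diagram (B) coincides as a based $\mathbb{A}$-algebra; write $\mathscr{K}^+(C)=\mathscr{K}^+(D)$ and $\widehat{\mathscr{E}}^+(C)=\widehat{\mathscr{E}}^+(D)$. Now invoke Corollary \ref{variantcorollary}(1)--(2): for $C$ there is an $\mathbb{A}$-algebra isomorphism $\phi_C\circ\psi_C\circ\delta_C: \mathscr{K}^+(C)\tilde{\to}\, _{\mathbb{A}}\mathbf{U}^-(C)$ carrying $\widehat{\mathscr{E}}^+(C)$ onto $\mathbf{B}(C)$, and similarly for $D$. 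Since $\mathbf{U}^-(C)=\mathbf{U}^-(D)$ with the \emph{same} generators $F_i$, and since both isomorphisms send the distinguished generator $L_{i,1}^+$ (or its $\delta$-image) to $F_i$ for every $i\in I$, the two isomorphisms agree on a generating set and hence coincide on all of $\mathscr{K}^+$. Combining these equalities yields
\[
\mathbf{B}(C)=\phi_C\psi_C\delta_C(\widehat{\mathscr{E}}^+(C))=\phi_D\psi_D\delta_D(\widehat{\mathscr{E}}^+(D))=\mathbf{B}(D).
\]

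The main (but minor) point I expect to need care is the verification that the two isomorphisms $\mathscr{K}^+(C)\tilde{\to}\mathbf{U}^-(C)$ and $\mathscr{K}^+(D)\tilde{\to}\mathbf{U}^-(D)$ are literally the same map after the identifications above; this reduces to checking that the generator of $\mathscr{K}^+_i$ for each $i\in I$ is independent of the loop structure, which is immediate from the fact that $\mathcal{E}_i^+$ and $\widetilde{\mathcal{F}}_{(i)}^+$ are the same varieties in both quivers. Everything else is a direct application of the machinery already built in Sections \ref{newalgebra}.
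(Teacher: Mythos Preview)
Your proposal is correct and follows essentially the same route as the paper: both arguments choose quivers for $C$ and $D$ with identical non-loop parts, observe that the based algebra $(\mathscr K^+,\widehat{\mathscr E}^+)$ depends only on $\Omega^+$ and hence is literally the same for the two quivers, and then transport $\widehat{\mathscr E}^+$ to $\mathbf U^-$ via $\phi\psi\delta$. Your write-up is in fact more explicit than the paper's one-line justification: where the paper simply asserts that the composite $\phi\psi\delta$ is independent of the number of loop arrows, you supply the reason---both composites are algebra isomorphisms $\mathscr K^+_Q\to \mathbf U^-$ sending the unique basis element in degree $i$ to $F_i$, and since $\mathbf U^-$ is generated by the $F_i$ they must coincide.
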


This is because they all equal the image of 
$\widehat{\mathscr E}^+$ under the composition $\phi\psi\delta$ of morphisms.
Note that $\phi\psi\delta$ is independent of the number of loop arrows
on the imaginary vertices, if the quivers for $C$ and $D$
are chosen such that the subquivers containing all non loop arrows are
the same.

\begin{rem}
As the referee pointed out, the natural scalar products of the various algebras $\mathscr K, \mathscr K^+$ and $\mathscr M$ are different, 
despite the fact that they are isomorphic.  
This difference explains the fact that only positive halves of the various quantum generalized Kac-Moody algebras are isomorphic to each other.
\end{rem}

\subsection{Independence of orientations}
Let $Q'=(I,\Omega, s',t')$ be a quiver 
with the sets $I$ and $\Omega$ the same as $Q=(I,\Omega,s,t)$ and
$\{s'(\omega),t'(\omega)\}=\{s(\omega),t(\omega)\}$
for all $\omega \in \Omega$. Associated to $Q'$ similar algebras
defined in Section ~\ref{algebraK}, ~\ref{algebraM} and ~\ref{newalgebra}.

\begin{prop}
\label{orientation}
The algebras defined in Section ~\ref{algebraK}, ~\ref{algebraM} and ~\ref{newalgebra}
are independent of changes of  orientations, via Fourier-Deligne
transform. Moreover, the Fourier-Deligne transform preserves the
various bases under the changes of orientations
\end{prop}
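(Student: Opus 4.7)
The plan is to reduce the general statement to a single elementary move, the reversal of a non-loop arrow $\omega_0 \in \Omega^+$; since on any fixed dimension vector $V$ only finitely many arrows are relevant, iterating such reversals connects any two orientations of $Q$. For one such reversal, decompose $E_V = E_{V,\Omega\setminus\{\omega_0\}} \oplus H_{\omega_0}$ with $H_{\omega_0}=\mathrm{Hom}(V_{s(\omega_0)},V_{t(\omega_0)})$, and similarly $E_V' = E_{V,\Omega\setminus\{\omega_0\}} \oplus H_{\omega_0}'$ for the reversed quiver $Q'$. The trace pairing identifies $H_{\omega_0}'$ with the dual vector bundle of $H_{\omega_0}$ over the common base $E_{V,\Omega\setminus\{\omega_0\}}$, and let $\Phi\colon\mathcal{D}_{G_V}(E_V)\to\mathcal{D}_{G_V}(E_V')$ be the associated Fourier-Deligne transform; it is a $G_V$-equivariant equivalence, preserves semisimplicity, and shifts the perverse $t$-structure by an explicit integer.

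The key step is to verify that $\Phi(L_{\mbf{i,a}})=L_{\mbf{i,a}}'$ up to shift, where $L_{\mbf{i,a}}'$ is defined using the $Q'$-analogue of $\widetilde{\mathcal{F}}_{\mbf{i,a}}$. The $x_{\omega_0}$-stability condition on a flag in $\mathcal{F}_{\mbf{i,a}}$ carves out a subbundle $B_{\mbf{i,a}} \subseteq H_{\omega_0}$ over $\mathcal{F}_{\mbf{i,a}} \times E_{V,\Omega\setminus\{\omega_0\}}$, and the corresponding $Q'$-condition carves out precisely its orthogonal $B_{\mbf{i,a}}^{\perp} \subseteq H_{\omega_0}'$. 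The classical fact that Fourier-Deligne transform sends the constant sheaf on a subbundle to the constant sheaf on its orthogonal (up to a shift), combined with base change along the proper map $p_{\mbf{i,a}}$, propagates through the pushforward along $\pi_{\mbf{i,a}}$ to give the desired identity. Therefore $\Phi$ restricts to a bijection $\mathcal{P}_V \cong \mathcal{P}_V'$ and induces an $\A$-module isomorphism $\mathcal{K}_V \cong \mathcal{K}_V'$ sending canonical basis to canonical basis. Compatibility with the induction product $\circ$ is Lusztig's classical argument: the diagram in Section \ref{inductionA} decomposes along the $\omega_0$-direction, so $\Phi$ intertwines $q_1^*$, $(q_2)_\flat$, and $(q_3)_!$ with their $Q'$-analogues via base change and the projection formula, yielding $\Phi(K_1\circ K_2)=\Phi(K_1)\circ\Phi(K_2)$ up to the expected shift.

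For the algebras $\mathscr{M}$, $\mathscr{K}$, $\mathscr{K}^+$ of Section \ref{newalgebra}, observe that the auxiliary varieties $\mathcal{E}_\nu$, $\mathcal{E}_\nu^+$, $\widetilde{\mathcal{F}}_{\mbf i}^+$ are defined using the loop set $\Omega^-$ intrinsically, and the non-loop orientation only enters through the factor $E_{V,\Omega^+}$ on which a Fourier transform acts componentwise in the same manner. Using flat base change for the vector bundles $\delta_\nu$, $\delta_{\mbf{i,a}}$, $\beta_{\mbf{i,a}}$, the transform $\Phi$ descends to bijections $\widehat{\mathscr{E}}_\nu \cong \widehat{\mathscr{E}}'_\nu$ and $\widehat{\mathscr{E}}_\nu^+ \cong (\widehat{\mathscr{E}}_\nu^+)'$, and Lemmas \ref{LemmaA}, \ref{LemmaB}, \ref{LemmaC} ensure these bijections extend to $\A$-algebra isomorphisms respecting the bases $\mathscr{R}$, $\widehat{\mathscr{E}}$, $\widehat{\mathscr{E}}^+$.

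The main obstacle is the bookkeeping in the induction diagrams of Section \ref{inductionB}: one must track Fourier transforms through the cartesian squares defining $\mathcal{E}'$, $\mathcal{E}''$ and their $+$-analogues, verifying at each stage that the Artin-Schreier twist is respected by the relevant base-change and projection-formula identities. Once this routine but notation-heavy chase is complete, the assertion that $\Phi$ preserves the various bases follows directly from its action on the generators $L_{\mbf{i,a}}$ and $L_{\mbf{i,a}}^-$.
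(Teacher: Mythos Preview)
Your proposal is correct and follows essentially the same approach as the paper, which does not give a self-contained proof but merely points to Lusztig's argument in \cite{lusztig2} for $\mathcal K$, invokes the compatibility of the Fourier--Deligne transform with base change \cite{Laumon}, and refers to \cite[Section~2]{KSaddendum} for the remaining algebras. Your sketch is in fact a more explicit account of precisely that strategy: the reduction to a single arrow reversal, the subbundle/orthogonal description of the stability condition, and the base-change bookkeeping through the induction diagrams are exactly what those references carry out.
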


The independence of the algebras $\mathcal K$ under changes of
orientations have been investigated by Lusztig in ~\cite{lusztig2}.
Notice the fact that Fourier-Deligne transform commutes with base
change (\cite{Laumon}). 
One can adapt the proof given in ~\cite{lusztig2} to the rest of the
algebras.  See ~\cite[Section 2]{KSaddendum}  for a concrete proof.

\section{A characterization of the  algebra $\mathcal M_Q$}

\subsection{}
\label{mainresults}
When the pair $(\mbf{i,a})=(i, n)$ for $i\in I$ and $n\in \mathbb N$, 
the isomorphism class of  the complex $L_{i,n}$ (see ~\ref{simple})
is in $\mathcal R_{ni}$. (Note that the $L_{i,n}$'s are simple
perverse sheaves supported on $\{0\}\subseteq E_V$ ($|V|=ni$).)

\begin{thm}
\label{theorem1}
The algebra $\mathcal M_Q$ (see ~\ref{algebraM}) is generated by the
complex $L_{i,n}$ for $i\in I$ and $n\in \mathbb N$. In particular, 
$\mathcal K_Q=\mathcal M_Q$ is generated by the $L_{i,n}$'s if $l_i\geq 2$
for all $i\in I^-$ (see ~\ref{quivers}).
\end{thm}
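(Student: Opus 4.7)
The plan is to combine the chain of $\mathbb A$-algebra isomorphisms
\[
\mathscr K^+ \xrightarrow{\delta} \mathscr K \xrightarrow{\psi} \mathscr M \xrightarrow{\phi} {}_{\mathbb A}\U^-
\]
from Proposition~\ref{variant} and Theorem~\ref{KSthm}(1) with the decomposition principle of Corollary~\ref{variantcorollary}(3), reducing the generation statement for $\mathcal M_Q$ to an inductive argument carried out on the $\widehat{\mathscr E}^+$ side. Denote by $\mathcal A_Q\subseteq\mathcal M_Q$ the subalgebra generated by $\{L_{i,n}:i\in I,\,n\in\mathbb N\}$. The inclusion $\mathcal A_Q\subseteq\mathcal M_Q$ is immediate: by Lemma~\ref{ind1}(3) each monomial $L_{i_1,a_1}\circ\cdots\circ L_{i_n,a_n}$ equals $L_{\mbf{i,a}}$, which in turn is a sum of shifted elements of $\mathcal R_V$ by Lemma~\ref{perverse}(1). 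It therefore suffices to prove that every $R\in\mathcal R_\nu$ belongs to $\mathcal A_Q$.

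The reverse inclusion is established by a double induction, primary on $|\nu|$ and secondary on the multiplicity $n_i(R)$ at a chosen sink $i$. Given $R\in\mathcal R_\nu$, first apply Proposition~\ref{orientation} to reorient $Q$ so that a fixed vertex $i\in I$ is a sink, let $R^+=(\phi\psi\delta)^{-1}(R)\in\widehat{\mathscr E}^+_\nu$, and pick $i$ with $n_i(R^+)>0$. Corollary~\ref{variantcorollary}(3) then provides
\[
R^+ = R_1^+\circ R_2^+ + R_3^+
\]
with $R_1^+\in\widehat{\mathscr E}^+_{mi}$, $R_2^+\in\widehat{\mathscr E}^+_{\nu-mi}$, and $n_i(R_3^+)>n_i(R^+)$. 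Transporting this identity through $\phi\psi\delta$, using the $\circ$-compatibility given by Lemmas~\ref{LemmaB} and~\ref{LemmaC}, produces
\[
R = L_{i,m}\circ R_2 + R_3
\]
in $\mathcal M_\nu$, where $R_2\in\mathcal R_{\nu-mi}$ and $R_3\in\mathcal M_\nu$ has $n_i(R_3)>n_i(R^+)$. The primary induction puts $R_2\in\mathcal A_Q$, the secondary induction handles $R_3$, and since $L_{i,m}$ is itself a generator we conclude $R\in\mathcal A_Q$.

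The main obstacle is verifying that the image of $R_1^+$ under $\phi\psi\delta$ is precisely $L_{i,m}$. Because $R_1^+$ is supported on representations at the single vertex $i$, this reduces to an explicit analysis: for $i\in I^+$ the variety $\mathcal E^+_{mi}$ is a point and the identification is tautological, whereas for an imaginary sink $i\in I^-$ one must match the full-flag push-forward on the loop variety at $i$ with the complex $L_{i,m}$ of Section~\ref{simple}, using the factorization $\pi_{\mbf{i,a}}=\psi_{\mbf{i,a}}\phi_{\mbf{i,a}}$ from Section~\ref{factorization}. Ensuring that this base-case identification remains compatible with $\circ$ across all of the diagrams of Section~\ref{inductionB} is the technical core of the argument.

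For the second assertion, the hypothesis $l_i\geq 2$ for every $i\in I^-$ gives $(\mathcal M,\circ)=(\mathcal K,\circ)$ and $\mathcal R=\mathcal P$ by Proposition~\ref{M}(3). Combined with the first assertion, this yields $\mathcal K_Q=\mathcal M_Q=\mathcal A_Q$, so $\mathcal K_Q$ is generated by the $L_{i,n}$'s, answering Lusztig's question affirmatively in this case.
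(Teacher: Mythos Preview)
Your argument conflates the based algebra $(\mathscr M,\mathscr R)$ with the based algebra $(\mathcal M,\mathcal R)$. The isomorphism $\psi\delta:\mathscr K^+\to\mathscr M$ of Proposition~\ref{variant} identifies $\widehat{\mathscr E}^+$ with $\mathscr R$, \emph{not} with $\mathcal R$. A generic element $P=(\psi_{\mbf{i,a}})_!(R)\in\mathcal R_\nu$ arises from a pair $(\mbf{i,a})\in\mathcal X_\nu$ that may have $a_l>1$ at imaginary vertices and hence need not lie in $\mathscr R_\nu$ at all; indeed $\mathcal M_Q\cong\U^-(\widetilde C)$ is strictly larger than $\mathscr M_Q\cong\U^-(C)$ whenever $I^-\neq\emptyset$ (compare Theorem~\ref{KSthm} with Proposition~\ref{prop}). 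Consequently the step ``let $R^+=(\phi\psi\delta)^{-1}(R)\in\widehat{\mathscr E}^+_\nu$'' is undefined, and the decomposition of Corollary~\ref{variantcorollary}(3) cannot be invoked directly.

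The paper repairs exactly this gap through the bridge of Section~\ref{bridge}. One first lifts $R$ to $R^+$ on $\mathcal E^+_{\mbf{i,a}}$ via $\delta_{\mbf{i,a}}$, and \emph{then} pulls back along $\beta_{\mbf{i,a}}$ to reach $S^+=\beta_{\mbf{i,a}}^*(R^+)\in\widehat{\mathscr E}^+_\nu$; only now is Corollary~\ref{variantcorollary}(3) applicable. Pushing the resulting decomposition forward by $(\beta_{\mbf{i,a}})_!$ (this is where Lemma~\ref{LemmaC} actually enters) does \emph{not} return $R^+$: one obtains $(\beta_{\mbf{i,a}})_!\beta_{\mbf{i,a}}^*(R^+)$, in which $R^+$ appears only as the top-degree summand $R^+[e_{\mbf{i,a}}]$. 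Thus after applying $(\psi_{\mbf{i,a}})_!\delta_{\mbf{i,a}}^*[d_{\mbf{i,a}}]$ one gets not a single equation $P=L_{i,m}\circ R_2+R_3$, but a finite upper-triangular system indexed by those $P'\in\mathcal R_\nu$ with $n_i(P')=N$, and one must solve this system to extract $P$. This triangularity argument is the substantive end of the paper's proof and is entirely missing from your proposal.
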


The proof will be given in Section ~\ref{proof1}.

Let $C=(c_{ij})_{i,j\in I}$ be the Borcherds-Cartan matrix associated
to $Q$  (see ~\ref{quivers}). We set
\[
\mbf I=I^+ \sqcup \{ni| n\in \mathbb N, i\in I^-\}, 
\]
and define a new Borcherds-Cartan matrix 
$\widetilde C=(\tilde c_{\mbf{ij}})_{\mbf{i,j\in I}}$
by
\[
\tilde c_{\mbf{ij}}=(\mbf i, \mbf j)
\]
where $(-,-)$ is the bilinear form associated to $Q$ (see
~\ref{quivers}).

\begin{prop}
\label{prop}
There exists elements $\xi_{\mbf i}\in \mathcal M_{\mbf i}$ for
$\mbf{i\in I}$ ($\xi_{\mbf i}=L_i$ if $i\in I$) such that the assignment 
$\xi_{\mbf i} \mapsto F_{\mbf  i}$ defines a $\mathbb Q(v)$-algebra
isomorphism
\[
\mathcal M_Q \cong \mbf U^-(\widetilde C),
\]
where $\mbf U^-(\widetilde C)$ is the negative part of the quantum generalized
Kac-Moody algebras associated to the matrix $\widetilde C$ (see ~\ref{quantum}).
\end{prop}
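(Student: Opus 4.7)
The plan is to exhibit the isomorphism by constructing generators $\xi_{\mbf i}$ of $\mathcal M_Q$, verifying the defining relations of $\U^-(\widetilde C)$ among them, and then proving injectivity via a graded dimension comparison. I would set $\xi_i=L_i=L_{i,1}$ for $\mbf i=i\in I\subset\mbf I$ as required by the statement, and tentatively $\xi_{ni}=L_{i,n}$ for $\mbf i=ni$ with $i\in I^-$ and $n\geq 2$; should this naive choice fail to satisfy the relations on the nose, one can inductively correct $\xi_{ni}$ by an element of the subalgebra generated by $\{\xi_{mi}:m<n\}$. By Theorem~\ref{theorem1} the $L_{i,n}$'s generate $\mathcal M_Q$, and since the $\xi_{\mbf i}$'s together with divided powers of the real $\xi_i$'s recover this generating set, any resulting well-defined assignment $F_{\mbf i}\mapsto\xi_{\mbf i}$ will yield a surjective $\mathbb Q(v)$-algebra homomorphism $\Phi:\U^-(\widetilde C)\twoheadrightarrow\mathcal M_Q$.

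Next I would verify the defining relations of $\U^-(\widetilde C)$ on the $\xi_{\mbf i}$'s. The Serre relations between $\xi_i$ and $\xi_j$ for $i,j\in I^+$, $i\neq j$, descend immediately from the identification $\mathscr M_Q\cong\U^-(C)$ of Theorem~\ref{KSthm}, since the subalgebra of $\mathcal M_Q$ generated by $L_k$ for $k\in I$ is precisely $\mathscr M_Q$, inside which these relations are defining. The commutation relations $\xi_{\mbf i}\xi_{\mbf j}=\xi_{\mbf j}\xi_{\mbf i}$ when $\tilde c_{\mbf i,\mbf j}=0$ split into two subcases: when the supports of $\mbf i$ and $\mbf j$ in $I$ are distinct, $E_V$ splits as a product and the K\"unneth formula applied to the induction diagram of Section~\ref{inductionA} yields commutation; the remaining case $\mbf i=mi,\mbf j=ni$ with $i\in I^-$ and $l_i=1$ reduces to the commutativity of the Hall-type algebra of the Jordan quiver, established by the symmetry of the associated Springer-type pushforwards.

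The crux will be the mixed Serre relation
\[
\sum_{p=0}^{1-nc_{ij}}(-1)^p\,\xi_i^{(p)}\circ\xi_{nj}\circ\xi_i^{(1-nc_{ij}-p)}=0
\]
for $i\in I^+$, $j\in I^-$ with $n\geq 1$ and $i\neq j$. I would invoke Proposition~\ref{orientation} to reduce to the case where $i$ is a sink in $Q$. Then $\xi_i^{(p)}$ is a shift of the constant sheaf concentrated at $0\in E_{pi}$, and each term $\xi_i^{(p)}\circ\xi_{nj}\circ\xi_i^{(q)}$ can be computed as a pushforward along a bundle whose fibers are products of Grassmannians of $i$-graded subspaces of the ambient $V$. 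The alternating sum should then vanish by the standard Koszul-type cancellation used in Lusztig's proof of the Serre relation in the loop-free case; this argument transfers here since the loops at $j$ do not enter the $x$-stability constraints at the sink vertex $i$. This geometric verification is the main obstacle of the proof.

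Injectivity will follow from a dimension comparison in the $\mathbb N[I]$-grading. Collapsing the natural $\mathbb N[\mbf I]$-grading of $\U^-(\widetilde C)$ via $ni\mapsto n\cdot i$, the degree-$\nu$ component has dimension $\sum_{\vec\nu\mapsto\nu}\dim\U^-(\widetilde C)_{\vec\nu}$. On the other hand $\dim\mathcal M_\nu=|\mathcal R_\nu|$, and $\mathcal R_\nu$ is parameterized by the isomorphism classes of $(\psi_{\mbf i,\mbf a})_!(R)$ with $(\mbf i,\mbf a)\in\mathcal X_\nu$ and $R$ a simple summand of $L^-_{\mbf i,\mbf a}$; this combinatorial indexing matches, after summing over all lifts $\vec\nu$ of $\nu$ to $\mathbb N[\mbf I]$, the canonical-basis indexing of $\U^-(\widetilde C)_\nu$ from~\cite{JKK}. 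Combined with surjectivity, this forces $\Phi$ to be an isomorphism.
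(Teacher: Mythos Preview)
Your approach differs substantially from the paper's and has genuine gaps. The paper does not verify any Serre relations directly, nor does it perform a dimension count. Instead it equips $\mathcal M_Q$ with Lusztig's comultiplication $r:\mathcal M_Q\to\mathcal M_Q\otimes\mathcal M_Q$ and the associated symmetric nondegenerate bilinear form, and for $i\in I^-$, $m\geq 1$ defines $\xi_{mi}$ as the normalized generator of the one-dimensional subspace $\mathcal H_{mi}\subset\mathcal M_Q(mi)$ orthogonal to $\sum_{0<m'<m}\mathcal M_Q(m'i)\,\mathcal M_Q((m-m')i)$. These $\xi_{mi}$ are primitive-like by construction, and the proposition then follows at once from the general result of Sevenhant--Van den Bergh~\cite{SV}, which identifies any graded algebra with such a bialgebra-plus-form structure with the appropriate $\U^-$. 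In particular the paper's $\xi_{mi}$ is not taken to be $L_{i,m}$.

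Your route has two concrete problems. First, the step ``should $\xi_{ni}=L_{i,n}$ fail, inductively correct it'' is unjustified: you give no mechanism guaranteeing that a single correction from the subalgebra generated by the lower $L_{i,m}$ can be chosen to satisfy \emph{simultaneously} the mixed Serre relation for every real vertex $j$ adjacent to $i$ together with the commutation relations; and once $\xi_{ni}$ is modified, your geometric sketch of the Serre relation (which is tailored to the sheaf $L_{j,n}$ supported at $0$) no longer applies to it. Second, the injectivity argument is circular. You assert that $|\mathcal R_\nu|$ matches the canonical-basis count for $\U^-(\widetilde C)_\nu$ from~\cite{JKK}, but no independent enumeration of $\mathcal R_\nu$ is available in the paper or in your outline; establishing that bijection is essentially equivalent to the proposition itself. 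The bilinear-form approach via~\cite{SV} bypasses both issues at once, which is exactly what it buys.
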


\begin{proof}
Recall that 
$\mathcal M_Q=\mathbb Q(v) \otimes_{\A} \mathcal M$. 
$\mathcal M_Q$ then has a natural $\mathbb N[I]$-grading inherited
from $\mathcal M$, i.e., 
$\mathcal M_Q=\oplus_{\nu\in \mathbb N[I]} \mathcal M_Q(\nu)$, 
where 
$\mathcal M_Q(\nu)=\mathbb Q(v) \otimes_{\A} \mathcal M_{\nu}$.

Define a $\mathbb Q(v)$-algebra structure on $\mathcal M_Q \otimes \mathcal M_Q$
by
\[
(x\otimes y) (x'\otimes y')=v^{(\mbox{deg}(y), \mbox{deg}(x'))}
xx'\otimes yy',
\]
where $x$ $y$, $x'$ and $y'$ are homogeneous elements, 
$\mbox{deg}(y)$ and $\mbox{deg}(x')$ are the degree of $y$ and $x'$, respectively.

Following Lusztig (\cite{lusztig4}), we can associate to $\mathcal M_Q$
an algebra homomorphism
\[
r: \mathcal M_Q \to \mathcal M_Q\otimes \mathcal M_Q
\]
and a symmetric  nondegenerate linear form
\[
(-, -) : \mathcal M_Q \otimes \mathcal M_Q \to \mathbb Q(v),
\]
satisfying 
\begin{enumerate}
\item $(1,1)=1$, and $(\mathcal M_Q(\nu), \mathcal M_Q (\nu'))=0$ if $\nu \neq \nu'$.

\item $(x,yy')=(r(x), y\otimes y')$ for $x, y, y' \in \mathcal M_Q$.
\end{enumerate}

Given any $i\in I^-$, we set
\[
\mathcal H_{mi}=\left \{ x \in \mathcal M_Q\;|\;  \left ( x, \sum_{m',m''}
\mathcal M_Q(m'i) \mathcal M_Q(m''i) \right )=0 \right \},
\]
where the sum runs over all pairs $(m',m'')$ such that $m'+m''=m$ and
$0 < m', m''<m$. Note that the elements of the following form span
$\mathcal M_Q(mi)$
\[
L_{i,m_1} \circ L_{i,m_2} \circ \cdots \circ L_{i,m_l},
\quad m_1+ \cdots +m_l=m.
\]
Note also that all these elements
are in $\sum_{m',m''}\mathcal M_Q(m'i)\mathcal M_Q(m''i)$, except
$L_{m, i}$. 
From these observations and the nondegeneracy of the linear form on
$\mathcal M_Q$, we have $\dim_{\mathbb Q(v)} \mathcal H_{mi}=1$.
For a nonzero element $x$ in $\mathcal H_{mi}$, $x$ must be of the
form $x=a_m F_{mi}+ x'$ where $a_m\in \mathbb Q(v)$ and 
$x'\in  \sum_{m',m''}\mathcal M_Q(m'i)\mathcal M_Q(m''i)$. We set
$\xi_{mi}=a_m^{-1}x$, for $m\in\mathbb N$ and $i\in I^-$.
Note that $\xi_{i}=L_{i,1}$ for all $i\in I^-$. Let $\xi_{i}=L_{i,1}$
if $i\in I^+$. So the set $\{\xi_{\mbf i}\;|\;\mbf{i\in I}\}$ generates $\mathcal M_Q$.

Now Proposition ~\ref{prop} follows from the argument in ~\cite{SV}. 
\end{proof}

\subsection{Proof of Theorem ~\ref{theorem1}}
\label{proof1}
We need some preparations. 
Let $Path(i)$ be the set of all paths $p= \omega_1\cdots \omega_n$ for $\omega_1,\cdots,\omega_n \in \Omega(i)$ and $n\in \mathbb Z_{>0}$.
Given any $x\in E_V$, we set 
\begin{align*}
V_{i,x}=\sum_{\omega\in\Omega^+: t(\omega)=i} x_{\omega}(V_{s(\omega)}),
\quad \mbox{and}\quad
V_i(x)=V_{i,x}+ \sum_{p\in Path(i)} x_p (V_{i,x}).
\end{align*} 
(See Section ~\ref{quivers} for notations.)
A vertex $i$ in $Q$ is called a $sink$ if $t(\omega)\neq i$ for all
$\omega\in \Omega^+$. Given any $x\in E_V$, we set
\[
n_i(x)=\mrm{codim}_{V_i} V_i(x) \quad \mbox{if}\; i \; \mbox{is a sink}.
\]
More generally, for any semisimple complex $P \in \mathcal D(E_V)$, 
we set
\[
n_i(P)=\mrm{min} \{ n_i(x)\;|\; x\in \mrm{supp}(P)\},
\]
where $\mrm{supp}(P)$ is the support of $P$.

For any semisimple perverse sheaf $(\psi_{\mbf{i,a}})_!(R) \in \mathcal R_V$
with $\mbf i=(i_1,\cdots,i_m)$, we may assume
that $i=i_1$ is a sink via change of orientations in view of Proposition
~\ref{orientation}. Under this assumption, one see that
\[
N=n_i ((\psi_{\mbf{i,a}})_!(R)) >0.
\]

On $\mathbb N[I]$, we define a partial order $<$ by $\tau<\nu$ if
$\tau_j\leq \nu_j$ for all $j \in I$ and $\tau_{j_0}<\nu_{j_0}$ for
some $j_0\in I$.

Now we begin to prove Theorem ~\ref{theorem1}.
Let $\mathfrak M$ be the subalgebra of $\mathcal M$ generated by
$L_{(i,n)}$ for $i\in I$ and $n\in \mathbb N$.
Assume that all semisimple perverse sheaves in $\mathcal R_{\tau}$ are
in $\mathfrak M$ for
all $\tau<\nu$ and all semisimple perverse sheaves 
$P'\in \mathcal R_{\nu}$ such that $n_i(P')>N>0$ are in $\mathfrak M$.
To show Theorem ~\ref{theorem1}, it suffices to show that 
any semisimple perverse sheaf 
$P=(\psi_{\mbf{i,a}})_!(R)\in\mathcal R_{\nu}$ 
with $n_i(P)=N$ is in $\mathfrak M$.

Let us consider the diagram in Section ~\ref{relation}. By 
property (\ref{relation}.2),
\[
(\phi_{\mbf{i,a}})_!(\bar{\mathbb Q}_l)
=\delta^*_{\mbf{i,a}}(\phi^+_{\mbf{i,a}})_!(\bar{\mathbb Q}_l).
\]
Since $\delta_{\mbf{i,a}}$ is a vector bundle, $\delta_{\mbf{i,a}}^*$ is
fully faithful. So there is a unique simple perverse sheaf $R^+$ as a
summand of $(\phi^+_{\mbf{i,a}})_!(\bar{\mathbb Q}_l)$ such that
$\delta_{\mbf{i,a}}^*[d_{\mbf{i,a}}](R^+)=R$. Observe that 
$n_i(R^+)>0$ since $N>0$.

Consider the diagram in Section ~\ref{bridge}. By property
(\ref{bridge}.1), we have
\[
(\phi_{\mbf{j}}^+)_! (\bar{\mathbb  Q}_l)
=\beta_{\mbf{i,a}}^*(\phi^+_{\mbf{i,a}})_! (\bar{\mathbb Q}_l).
\]
From this and the fact that $\beta_{\mbf{i,a}}$ is smooth with
connected fibres, we see that
\[
S^+=\beta_{\mbf{i,a}}^*(R^+)
\] 
is a simple perverse sheaf (up to a shift) on 
$\mathcal E_{\mbf j}$ and a direct summand of 
$(\phi^+_{\mbf j})_!(\bar{\mathbb Q}_l)$.
Moreover $n_i(S^+)=n_i(R^+)>0$. By Corollary ~\ref{variantcorollary} (3),
\begin{align}
\label{step1}
\beta_{\mbf{i,a}}^*(R^+)=S^+_1\circ S^+_2 + S^+_3
\end{align}
where $S^+_1\in \widehat{\mathscr E}^+_{mi}$ ($n\geq m >0$) 
and $S^+_2\in \widehat{\mathscr E}^+_{\nu-mi}$,
and $S^+_3$ is a semisimple complex on $\mathcal E_{\nu}^+$ such
that $n_i(S^+_3)> n_i(S^+)$. Thus by Lemma ~\ref{LemmaC},
\begin{align}
\label{step2}
(\beta_{\mbf{i,a}})_! \beta_{\mbf{i,a}}^*(R^+)=
(\beta_{\mbf{i',a'}})_!(S^+_1)\circ 
(\beta_{\mbf{i'',a''}})_!(S^+_2) +
(\beta_{\mbf{i,a}})_!( S^+_3),
\end{align}
where $(\beta_{\mbf{i',a'}})_!(S^+_1)\in \mathcal M_{mi}$,  
$(\beta_{\mbf{i'',a''}})_!(S^+_2) \in \mathcal M_{\nu-mi}$ and
$(\beta_{\mbf{i,a}})_!( S^+_3)\in \mathcal M_{\nu}$ with
$n_i((\beta_{\mbf{i,a}})_!( S^+_3))> n_i(R^+)$.

By applying the functor $\delta_{\mbf{i,a}}^*[d_{\mbf{i,a}}]$ to equation
(\ref{step2}) and  Lemma ~\ref{LemmaB},
\begin{align}
\label{step3}
\begin{split}
&\delta_{\mbf{i,a}}^*[d_{\mbf{i,a}}] (\beta_{\mbf{i,a}})_! \beta_{\mbf{i,a}}^*(R^+)\\
&=
\delta^*_{\mbf{i',a'}} [d_{\mbf{i',a'}}] (\beta_{\mbf{i',a'}})_!(S^+_1)\circ 
\delta^*_{\mbf{i'',a''}}[d_{\mbf{i'',a''}}] (\beta_{\mbf{i'',a''}})_!(S^+_2) +
\delta^*_{\mbf{i,a}}[d_{\mbf{i,a}}](\beta_{\mbf{i,a}})_!( S^+_3).
\end{split}
\end{align}

By applying the functor $\psi_{\mbf{i,a}}$ to equation (\ref{step3}) and in view of Lemma
~\ref{LemmaC}, 
\begin{align}
\label{step4}
\begin{split}
(\psi_{\mbf{i,a}})_!&
\delta_{\mbf{i,a}}^*[d_{\mbf{i,a}}] (\beta_{\mbf{i,a}})_! \beta_{\mbf{i,a}}^*(R^+)\\
&=
(\psi_{\mbf{i',a'}})_!
\delta^*_{\mbf{i',a'}} [d_{\mbf{i',a'}}] (\beta_{\mbf{i',a'}})_!(S^+_1)
\circ 
(\psi_{\mbf{i'',a''}})_!
\delta^*_{\mbf{i'',a''}}[d_{\mbf{i'',a''}}] (\beta_{\mbf{i'',a''}})_!(S^+_2) \\
&\quad
+
(\psi_{\mbf{i,a}})_!
\delta^*_{\mbf{i,a}}[d_{\mbf{i,a}}](\beta_{\mbf{i,a}})_!( S^+_3),
\end{split}
\end{align}
By assumption, the right hand side of equation (\ref{step4}) is in
$\mathfrak M$. So is the left hand side.

Since $\beta_{\mbf{i,a}}$ is smooth with connected fibres, by
corollary 4.2.6.2 in ~\cite{BBD}, 
\[
R^+=\, ^pH^{-e_{\mbf{i,a}}} (\beta_{\mbf{i,a}})_! \beta_{\mbf{i,a}}^* (R^+),
\]
where $^pH^{-e_{\mbf{i,a}}}$ is the perverse cohomology functor at
degree $-e_{\mbf{i,a}}$ and
$e_{\mbf{i,a}}$ is the fibre dimension of $\beta_{\mbf{i,a}}$.
But $(\beta_{\mbf{i,a}})_! \beta_{\mbf{i,a}}^* (R^+)$ is semisimple, so
\[
\begin{split}
(\beta_{\mbf{i,a}})_! \beta_{\mbf{i,a}}^*(R^+)
&=\oplus_{-e_{\mbf{i,a}}\leq z\leq e_{\mbf{i,a}}} \, 
^pH^z(\beta_{\mbf{i,a}})_! \beta_{\mbf{i,a}}^* (R^+)[-z]\\
&= R^+[e_{\mbf{i,a}}]\oplus \oplus_{-e_{\mbf{i,a}}\leq z <e_{\mbf{i,a}}} R'[z].
\end{split}
\]
Thus
$(\psi_{\mbf{i,a}})_!(R)[e_{\mbf{i,a}}]$ is a direct summand of 
$(\psi_{\mbf{i,a}})_!\delta_{\mbf{i,a}}^*[d_{\mbf{i,a}}](\beta_{\mbf{i,a}})_!
\beta_{\mbf{i,a}}^* (R^+)$, as the leading term with respect to the
degree of shift. 
Moreover, for any other direct summand $P'$ in  
$(\psi_{\mbf{i,a}})_!\delta_{\mbf{i,a}}^*[d_{\mbf{i,a}}](\beta_{\mbf{i,a}})_!
\beta_{\mbf{i,a}}^* (R^+)$ is of the form
\[
P'[z]=(\psi_{\mbf{i,a}})_! (R')[z],\quad -e_{\mbf{i,a}}\leq z <e_{\mbf{i,a}}
\]
and
$n_i(P')\geq N=n_i(R^+)$.

For those $P'=(\psi_{\mbf{i,a}})_!(R')$'s such that $n_i(P')=N$, 
we can apply the same argument
again to obtain equations similar to equation (\ref{step4}): 
\begin{align}
\label{step5}
\begin{split}
(\psi_{\mbf{i,a}})_!&
\delta_{\mbf{i,a}}^*[d_{\mbf{i,a}}] (\beta_{\mbf{i,a}})_! \beta_{\mbf{i,a}}^*(R'^+)\\
=&
(\psi_{\mbf{i',a'}})_!
\delta^*_{\mbf{i',a'}} [d_{\mbf{i',a'}}] (\beta_{\mbf{i',a'}})_!(S'^+_1)
\circ 
(\psi_{\mbf{i'',a''}})_!
\delta^*_{\mbf{i'',a''}}[d_{\mbf{i'',a''}}] (\beta_{\mbf{i'',a''}})_!(S'^+_2)\\ 
&+
(\psi_{\mbf{i,a}})_!
\delta^*_{\mbf{i,a}}[d_{\mbf{i,a}}](\beta_{\mbf{i,a}})_!( S'^+_3),
\end{split}
\end{align}
Again, by induction hypothesis, 
the right hand side of (\ref{step5}) is in $\mathfrak M$, 
so is the left hand side.

Thus we have a system of finitely many linear equations indexed by
$P'$ such that $n_i(P')=N$ in $\mathfrak M$. Notice the left hand side
of each equation has a leading term of 
the form $P'[e_{\mbf{i,a}}]$. So the terms
in the left hand sides of the equations are linearly independent. Therefore
we can solve for $(\psi_{\mbf{i,a}})_!(R)$ from this system in
$\mathfrak M$. In other words, $(\psi_{\mbf{i,a}})_!(R)\in \mathfrak
M$. This finishes the proof.

\section{Comments}
{\bf 1.} 
It may be of interest to generalize the main results in this paper to the symmetrizable case.

{\bf 2.}
In view of Theorem ~\ref{C=D} and Theorem ~\ref{KSthm} (2), the global
crystal bases $\mathcal B(C)$ and $\mathcal B(D)$ in ~\cite{JKK} coincide if $C\approx D$, 
$c_{ii}\neq 0$ and $d_{ii}\neq 0$ for all $i\in I^-$. One may
conjecture that the conditions $c_{ii}\neq 0$ and $d_{ii}\neq 0$ can
be removed.

{\bf Conjecture.} $\mathcal B(C)=\mathcal B(D)$ if $\U^-(C)=\U^-(D)$. 

The conjecture in
~\cite{KS} is a consequence of the above conjecture.

{\bf 3.} 
Lusztig asked in ~\cite{lusztig3} if $\mathcal K_Q=\mathcal M_Q$. 
This is the case if the quiver $Q$ does not have any loop (\cite{lusztig2}).
When $Q$ is the Jordan quiver (the quiver with one vertex and
one loop arrow), $\mathcal K_Q=\mathcal M_Q$. This is because every
simple perverse sheaves in $\mathcal K_Q$ is the leading term of a
certain monomial. 
More generally, when
$l_i\geq 2$ for $i\in I^-$, $\mathcal K_Q=\mathcal M_Q$ by Theorem
~\ref{theorem1}. 
For $Q$ such that $l_i=1$ for some $i\in I^-$, computational evidents
show that the simple perverse sheaves in $\mathcal K_Q$ are leading
terms of some semisimple perverse sheaves
$(\psi_{\mbf{i,a}})_!(R)$. If one can show that this holds, then the
question asked by Lusztig in ~\cite{lusztig3} gets a positive answer.

\end{document}